\definecolor{webgreen}{rgb}{0,.5,0}
\definecolor{webbrown}{rgb}{.6,0,0}
\newcommand{\seqnum}[1]{\href{https://oeis.org/#1}{\rm \underline{#1}}}
\begin{document}


\theoremstyle{plain}
\newtheorem{theorem}{Theorem}
\newtheorem{corollary}[theorem]{Corollary}
\newtheorem{lemma}[theorem]{Lemma}
\newtheorem{proposition}[theorem]{Proposition}
\newtheorem{remarks}[theorem]{Remarks}

\theoremstyle{definition}
\newtheorem{definition}[theorem]{Definition}
\newtheorem{example}[theorem]{Example}
\newtheorem{conjecture}[theorem]{Conjecture}

\theoremstyle{remark}
\newtheorem{remark}[theorem]{Remark}

        \newcommand{\Gandhar}[1]{\marginpar{\small \raggedright \textcolor{orange}{{\bfseries Gandhar:} #1}}}
        \newcommand{\gandhar}[1]{\textcolor{orange}{ #1}}
\begin{center}
\vskip 1cm{\LARGE\bf Anti-Recurrence Sequences
}
\vskip 1cm
\large
Robbert Fokkink\\
Technical University Delft\\ Faculty of Mathematics\\ Mekelweg 4, 2628CD\\
The Netherlands\\
\href{mailto:email}{\tt r.j.fokkink@tudelft.nl} \\
\vskip 0.2cm
Gandhar Joshi\\
School of Mathematics and Statistics\\ 
The Open University\\
Walton Hall\\
Milton Keynes, MK7 6AA\\ 
UK\\
\href{mailto:email}{\tt gandhar.joshi@open.ac.uk}
\end{center}

\vskip .2 in
\begin{abstract}
We extend previous work on anti-recurrence sequences of Kimberling and Moses, Zaslavsky, and Bosma et al. 
Kimberling and Moses have formulated several questions on these sequences, which can be combined into 
the meta-conjecture that anti-recurrence sequences are sums of linear progressions and automatic sequences. 
We solve this conjecture under a restriction on the linear form that generates the anti-recurrence.
\end{abstract}

In a linear recurrence sequence, each term is a linear combination of the ones that came before it. The study
of these sequences is a topic in itself~\cite{everest}. The first example that jumps to 
everyone's mind is the Fibonacci sequence
\[
F_{n+1}=F_n+F_{n-1},
\]
starting from the initial conditions $F_0=0$ and $F_1=1$. These numbers have a generative nature. 
Starting from the immortal Eve, $F_n$ is the number
of female rabbits in the $n$-th generation, if each mature female produces the perfect offspring of one female and one male baby rabbit, 
{which mature in the next generation, ready to reproduce. This happens in perfect sync, forever and ever; as}
a mathematical abstraction, of course. 
In reality, the average number of babies per litter is 4.5, and female rabbits produce up to ten litters
per lifetime~\cite{rabbit}.

Recurrence sequences are defined by earlier terms in the sequence. In contrast with this, the \emph{anti-recurrence}
sequences, which we consider in this paper, are defined by earlier terms that are \emph{not} in the sequence. 
The anti-Fibonacci numbers start with $A_0=0$. They extend by the rule 
\newline
\vspace{-0.5cm}
\begin{center}
\fbox{
  \parbox{0.7\linewidth}{
    \centering
    {The next anti-Fibonacci number is the sum of the two\\
    most recent NON-members of the anti-Fibonacci sequence.}
  }
}
\end{center}
The first two non-members 1 and 2 add up to the anti-Fibonacci $A_1=3$. 
The next two non-members are 4 and 5, which add
up to the anti-Fibonacci $A_2=9$, etc.
This sequence is listed under \seqnum{A075326} in the On-Line Encyclopedia of Integer Sequences.
\[
0, 3, 9, 13, 18, 23, 29, 33, 39, 43, 49, 53, 58, 63, 69, 73, 78, 83, 89, 93, 98, 103, 109, 113,\ldots
\]
It was entered into the OEIS by Amarnath Murthy and was named anti-Fibonacci by Douglas Hofstadter in an unpublished note~\cite{hofstadter}. He observed that the first difference sequence
\begin{equation}\label{eq:H}
3, 6, 4, 5, 5, 6, 4, 6, 4, 6, 4, 5, 5, 6, 4, 5, 5, 6, 4, 5, 5, 6, 4,\ldots
\end{equation}
consists of the two-letter words $64$ and $55$, apart from the initial letter $3$. 
This is \seqnum{A249032} in the OEIS.
All numbers with
final digit $3$ are anti-Fibonaccis, and the other anti-Fibonaccis either end with a 9 or an 8. 
Hofstadter observed, without giving a proof, that the pattern of 9's and 8's can be generated from a period-doubling substitution
\[
9\mapsto 98,\ 8\mapsto 99.
\]
The proof was supplied by Thomas Zaslavsky, in another unpublished note~\cite{zaslavsky}. In particular,
he gave an explicit equation for the anti-Fibonacci numbers
\begin{equation}\label{eq:0}
 \text{For all }n\geq1,\quad\seqnum{A075326}(n)-5n+2=\texttt{PD}_{n-1}.
\end{equation}
The period doubling sequence $\mathtt{PD}_n$ consists of zeros and ones and is generated by
\[
0\mapsto 01,\ 1\mapsto 00,
\]
starting from $\mathtt{PD}_0=0$. It is entry \seqnum{A096268} in the OEIS. Note that the 
indexing runs from~0 and not from~1. This is culturally defined. 
\smallbreak
\textbf{CAUTION:} For automatic sequences such as \texttt{PD}, indexing starts at zero.
\smallbreak
\smallbreak
Clark Kimberling and Peter Moses studied the more general
class of complementary sequences~\cite{kimberlingmoses},
for which anti-recurrence sequences are a special case.
They observed some properties of anti-recurrence sequences, which
Kimberling entered as conjectures under
\seqnum{A265389}, \seqnum{A299409}, \seqnum{A304499}, and \seqnum{A304502} in the OEIS. The conjectures
for the first two sequences were verified by Bosma et al.~\cite{walnutoeis}
using Hamoon Mousavi's automatic theorem prover \texttt{Walnut}~\cite{mousavi}.
We settle the other two conjectures on \seqnum{A304499} and \seqnum{A304502} in this paper, again with the assistance of \texttt{Walnut}.
These conjectures can be combined into a meta-conjecture, which specifies the discussion in section six of \cite{kimberlingmoses}. It was named the Clergyman's conjecture in~\cite{walnutoeis}.
\begin{conjecture}
    Every anti-recurrence sequence is a sum of a linear sequence and an automatic sequence.
\end{conjecture}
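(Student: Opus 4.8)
Fix a non-degenerate linear form $L(x_1,\dots,x_k)=c_1x_1+\cdots+c_kx_k$ (possibly with an extra constant term) with positive integer coefficients, and let $A=A^L$ be the anti-recurrence sequence it generates, with complement $B=(B_m)_{m\ge0}$ enumerated increasingly. The starting point is that $A$ and $B$ are not merely defined but \emph{characterised} by a short list of conditions: $A_0=0$; the sequence $A$ is increasing and $B$ enumerates $\mathbb N\setminus\{A_n:n\ge0\}$ in increasing order; and for every $n\ge1$, $A_n=L\bigl(B_{k(n-1)},B_{k(n-1)+1},\dots,B_{kn-1}\bigr)$. Since the arguments of $L$ occurring here are all smaller than $A_n$, this recursion is well posed, so $A^L$ is the \emph{unique} sequence meeting the conditions. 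Hence the conjecture, for a given $L$, reduces to exhibiting a slope $\alpha_L\in\mathbb Z$, a constant $\beta_L\in\mathbb Z$, a Pisot numeration system $U_L$, and a $U_L$-automatic sequence $c=(c_n)$ taking finitely many values such that $n\mapsto\alpha_Ln+\beta_L+c_n$ satisfies the characterising conditions; uniqueness then forces $A^L_n=\alpha_Ln+\beta_L+c_n$ for all large $n$, which is a linear sequence plus an automatic one.

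The plan to produce and certify these ingredients has three stages. First, \emph{guessing.} The slope is forced by a renewal count: each term of $A$ consumes $k$ terms of $B$, and combining this with $\{A_n\}\sqcup\{B_m\}=\mathbb N$ and the asymptotics $B_m\sim\alpha_Lm/(\alpha_L-1)$, $A_n\sim\alpha_Ln$ yields $\alpha_L-1=k(c_1+\cdots+c_k)$; for $L=x_1+x_2$ this is the slope $5$ in~\eqref{eq:0}. With $\alpha_L$ and a fitted $\beta_L$ in hand, one computes a long prefix of the bounded sequence $A^L_n-\alpha_Ln-\beta_L$ and searches, over a candidate numeration system $U_L$, for a finite automaton with output that generates it; for $L=x_1+x_2$ this recovers Zaslavsky's identity, with $U_L$ ordinary base~$2$ and $c=\mathtt{PD}$. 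Second, \emph{packaging.} Because addition is recognisable in a Pisot system, the graph $\{(n,\alpha_Ln+\beta_L+c_n):n\ge0\}$ is first-order definable over $\langle\mathbb N,+,V_{U_L}\rangle$, hence $U_L$-recognisable; from it one mechanically builds automata for the candidate member set, for its complement, and for the increasing enumeration $m\mapsto B_m$ of that complement. Third, \emph{verification.} Each characterising condition becomes a first-order sentence over $\langle\mathbb N,+,V_{U_L}\rangle$, the only mildly delicate one being the block identity $A_n=L(B_{k(n-1)},\dots,B_{kn-1})$, which is written using the synchronised enumeration of $B$ together with a constant-coefficient linear combination encoded through $+$; Walnut then decides each of these sentences. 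A positive verdict, combined with the uniqueness of the first paragraph, proves the conjecture for that $L$.

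The main obstacle lives entirely in the first stage, and it is exactly what the restriction in the abstract is there to remove. For an arbitrary form $L$ there is no reason that the numeration system naturally attached to $A^L$ is Pisot, or that addition is recognisable in it; without that property the automata of the second stage cannot be constructed and Walnut cannot be invoked at all. The restriction on $L$ is imposed so that $U_L$ falls into the well-understood, Walnut-implemented family of numeration systems with automatic addition, in the same way that the sum form $x_1+x_2$ lands on plain binary. A secondary, purely computational obstacle is blow-up: the guessed automaton for $c$ and the sentence encoding the block identity can be large enough that a naive encoding fails to terminate, so part of the effort goes into reformulating that identity economically, for instance by carrying an auxiliary synchronised pointer $n\mapsto k(n-1)$ that locates the first $B$-term consumed by $A_n$, so that the decision procedure runs to completion.
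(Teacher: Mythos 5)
The statement you are addressing is a conjecture that the paper itself leaves open: the authors prove it only for specific forms (via \texttt{Walnut}) and, in Theorem~\ref{main}, under the restriction of $A_1$-boundedness. Your proposal is not a proof of the universally quantified statement but a description of a per-instance verification pipeline (guess a slope and an automaton, synchronise, have \texttt{Walnut} check the mex/block axioms). That pipeline is indeed what the paper carries out for $\mathbf a=(1,2)$, $(2,1)$, and the anti-bonaccis, and your uniqueness reduction and slope computation $\kappa=k\tau+1$ are correct. But the pipeline proves nothing for a general $L$: the guessing stage comes with no guarantee of success, and the paper's own example of the $4$-rusty numbers ($\mathbf a=(1,4)$), where $A_{5n+1}$ agrees with an arithmetic progression only up to $n=348$ and then breaks, shows that the natural guess can fail. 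A proof of the conjecture must establish, for every positive integral $\mathbf a$, that $A_n-\kappa n$ is bounded and substitution-generated; your proposal contains no argument for that.

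You also misdiagnose where the difficulty lies and what the paper's restriction does. The conjectured numeration system is always plain base $\tau$ (an ordinary integer base), so recognisability of addition is never in question; Pisot numeration systems play no role. The actual obstacle is combinatorial: one needs each interval $I_n=[\kappa(n-1)+1,\kappa n]$ to contain exactly one anti-recurrence number and exactly $\tau$ complete $B$-blocks, so that the passage from $A_j \bmod \kappa$ to the $\tau$ residues it spawns is a uniform substitution of length $\tau$ on a finite alphabet, whence $\tau$-automaticity by Cobham's little theorem. The paper's restriction $A_1\le(k-1)\tau+2$ (``$A_1$-boundedness'') is precisely what makes this interval bookkeeping close up in Lemma~\ref{lem:3}; it is not a condition on the numeration system. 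Note also that the paper's proof of the restricted Theorem~\ref{main} is a direct combinatorial argument requiring no computer verification at all, whereas your scheme would still need \texttt{Walnut} for every individual form.
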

Our paper is organized as follows. In section 1 we review the basic notions. Section 2 settles
the conjectures of Kimberling for \seqnum{A304499} and \seqnum{A304502} using \texttt{Walnut}. 
In section 3 we extend
the results of Bosma et al.\hspace{-0.1cm} and solve the conjecture for anti-bonaccis. 
Our main result, Theorem~\ref{main} in section 4, settles the conjecture under a restriction
on the linear form that generates it. We
are unable to settle the full conjecture.

\section{Definitions, notation, preliminary results}

All numbers are natural numbers (positive integers) unless stated otherwise. We will write sequences in capitals. 
Two strictly increasing sequences $A_n$ and $B_n$ of natural numbers 
are \emph{complementary} if
every natural number is either in $A_n$ or in $B_n$ exclusively.
Let $\mathbf a=(a_1,\ldots,a_k)$ be a
positive integral vector of dimension $k>1$ and all $a_i>0$.
Let $f(\mathbf x)=\mathbf a\cdot \mathbf x$ be its associated linear form. We say that $A_n$ is an
\emph{anti-recurrence sequence} if
\begin{equation}\label{eq:1}
A_{n}=f(B_{(n-1)k+1},B_{(n-1)k+2},\ldots,B_{nk})=\sum_{j=1}^k a_j B_{(n-1)k+j}.    
\end{equation}
The \emph{trace} $\tau$ of the linear form is $\tau=\sum_{i=1}^m a_i\geq k$.

In a precise but elaborate naming convention, $A_n$ is the anti-recurrence sequence and its complement $B_n$ is the \emph{non-anti-recurrence sequence}.
We say that a set $\{B_{jk+1},B_{jk+2},\ldots,B_{(j+1)k})$ is the \emph{$B$-block}
that generates $A_{j+1}$. Note that we use $X_n$ both for the sequence and the individual
number and the context will make clear what we mean. 
If numbers $\{a,a+1,\ldots,b\}$ are consecutive, then we say that they
form the {interval} $[a,b]$. 

 \begin{lemma}\label{lem:2}
        Successive anti-recurrence numbers satisfy \[A_{n+1}-A_{n}\geq k\tau.\] 
        The above is an equality if the $B$-blocks for both $A_{n+1}$ and $A_n$
        are intervals such that their union is also an interval. In particular, 
        the inequality is strict if one block is an
        interval and the other is not.
    \end{lemma}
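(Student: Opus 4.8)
The plan is to expand the difference $A_{n+1}-A_n$ with the defining relation~\eqref{eq:1} and then use that $B_n$ is strictly increasing. Writing out both terms,
\[
A_{n+1}-A_n=\sum_{j=1}^k a_j B_{nk+j}-\sum_{j=1}^k a_j B_{(n-1)k+j}=\sum_{j=1}^k a_j\bigl(B_{nk+j}-B_{(n-1)k+j}\bigr).
\]
Because $B_n$ is a strictly increasing sequence of natural numbers we have $B_{m+1}\geq B_m+1$ for every $m$, and iterating this $k$ times gives $B_{m+k}\geq B_m+k$. Taking $m=(n-1)k+j$ yields $B_{nk+j}-B_{(n-1)k+j}\geq k$ for each $j$, and since every $a_j$ is a positive integer, summing gives $A_{n+1}-A_n\geq k\sum_{j=1}^k a_j=k\tau$, as claimed.

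For the equality clause, observe first that, since each $a_j>0$, the displayed estimate is an equality precisely when $B_{nk+j}-B_{(n-1)k+j}=k$ for every $j=1,\dots,k$. Now suppose the $B$-blocks for $A_n$ and $A_{n+1}$ are both intervals whose union is an interval. The first block has $k$ elements, so it equals $[a,a+k-1]$ for some $a$, i.e. $B_{(n-1)k+j}=a+j-1$; in particular its largest element is $B_{nk}=a+k-1$. Since $B_{nk+1}>B_{nk}$, the second block lies entirely above $a+k-1$, and as the union of the two blocks is an interval it can have no gap at $a+k$, so the second block must be $[a+k,a+2k-1]$. Hence $B_{nk+j}=a+k+j-1=B_{(n-1)k+j}+k$ for all $j$, and equality holds.

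For the ``in particular'' assertion I would prove the converse of the previous paragraph, from which it follows by contraposition. Assume equality, so $B_{nk+j}=B_{(n-1)k+j}+k$ for all $j$; in particular $B_{nk+1}=B_{(n-1)k+1}+k$. On the other hand, strict monotonicity gives $B_{nk+1}>B_{nk}=B_{(n-1)k+k}\geq B_{(n-1)k+1}+(k-1)$. Squeezing, $B_{(n-1)k+1}+(k-1)\leq B_{(n-1)k+k}<B_{(n-1)k+1}+k$, so $B_{(n-1)k+k}=B_{(n-1)k+1}+(k-1)$; but $B_{(n-1)k+1}<\cdots<B_{(n-1)k+k}$ are $k$ integers whose extremes differ by exactly $k-1$, forcing the first block to be the interval $[B_{(n-1)k+1},B_{(n-1)k+1}+k-1]$. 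Then $B_{nk+j}=B_{(n-1)k+j}+k$ makes the second block the adjacent interval, with interval union. Thus equality forces both blocks to be intervals whose union is an interval, so if one block is an interval and the other is not, the inequality must be strict. The argument is elementary throughout; the only step that needs care is the index bookkeeping linking the two consecutive $B$-blocks together with the short squeeze that upgrades ``$B_{nk+j}-B_{(n-1)k+j}=k$ for all $j$'' to ``both blocks are intervals,'' and no genuine obstacle arises.
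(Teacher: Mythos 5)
Your proof is correct and follows the same approach as the paper: expand $A_{n+1}-A_n$ termwise via Equation~\eqref{eq:1} and bound each difference $B_{nk+j}-B_{(n-1)k+j}$ by $k$ using strict monotonicity. The only difference is that the paper asserts the strictness claim in one line (``there must be a $j$ with $B_{k+m+j}-B_{m+j}>k$''), whereas you actually prove it via the squeeze-and-contrapositive argument showing equality forces both blocks to be adjacent intervals --- a welcome piece of extra rigor, not a deviation in method.
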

    \begin{proof}
        Let $\mathbf {a}=(a_1,\ldots,a_k)$ and let $B_n$ be the non-anti-recurrence sequence. 
        We have that \[A_n=\sum_{j=1}^k a_jB_{m+j}\] for $m=k(n-1)$ and
        \[A_{n+1}=\sum_{j=1}^k a_jB_{k+m+j}.\]
        Now $B_{k+m+j}-B_{m+j}\geq k$ since this sequence is increasing. If the $B$-blocks are consecutive
        intervals, then this is an equality. If one is an interval and the other is not, then
        there must be a $j$ such that $B_{k+m+j}-B_{m+j}> k$.
    \end{proof}

The \emph{mex} or minimal excluded value of $S\subset \mathbb N$ is
\[
\text{mex}(S)=\text{min}\left(\mathbb N\setminus S\right).
\]
It comes up naturally in anti-recurrences sequences, as
observed by Kimberling and Moses.

\begin{lemma}\label{lem:1}
    A positive linear form $\mathbf a$ determines $A_n$ and $B_n$. 
\end{lemma}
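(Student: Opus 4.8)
The plan is to construct $A_n$ and $B_n$ simultaneously by induction on $n$ and to observe that at every stage the construction admits no choice, which gives both existence and uniqueness at once. Fix the seed $A_0=0$, so that $B_n$ is forced to enumerate, in increasing order, exactly the natural numbers that are not of the form $A_i$. As the induction hypothesis after $m$ steps I would carry the statement that $A_0<A_1<\dots<A_m$ and $B_1<\dots<B_{mk}$ are already determined, that $\{A_0,\dots,A_m\}$ and $\{B_1,\dots,B_{mk}\}$ are disjoint, that their union contains the initial segment $\{0,1,\dots,B_{mk}\}$, and that $A_m>B_{mk}$. The base case is $m=1$ with $B_1=1,\dots,B_k=k$ and $A_1=f(1,\dots,k)=\sum_{j=1}^k j\,a_j$; here $A_1\ge\frac{k(k+1)}{2}>k$ precisely because $k>1$, which gives disjointness and $A_1>B_k$.

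For the inductive step the hypothesis forces the next block: since $\{0,\dots,B_{mk}\}$ is already used up, $B_{mk+1},\dots,B_{(m+1)k}$ must be the $k$ smallest integers exceeding $B_{mk}$ that are not among $A_0,\dots,A_m$, and then $A_{m+1}$ is forced by \eqref{eq:1}. It remains to see that this choice reinstates the hypothesis. Monotonicity $A_{m+1}>A_m$ is Lemma~\ref{lem:2}. The key inequality is
\[
A_{m+1}=\sum_{j=1}^k a_j B_{mk+j}\ \ge\ \sum_{j=1}^k B_{mk+j}\ >\ B_{(m+1)k},
\]
which holds because $k>1$ and every $a_j\ge1$. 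It says that $A_{m+1}$ overshoots the $B$-block just committed, hence differs from every $B$ already placed; and since $A$ is increasing, no later $A_i$ can clash with that block either, so the greedy choice of the block was indeed correct. Thus no $B$ already written down needs revision, the enlarged union still covers $\{0,\dots,B_{(m+1)k}\}$, and the induction proceeds.

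To finish, note that $B_n$ is a strictly increasing sequence of positive integers, hence unbounded, so every $N\in\mathbb N$ satisfies $B_{mk}\ge N$ for some $m$ and is therefore placed in exactly one of the two sequences; thus $A_n$ and $B_n$ are complementary. Since every step was dictated by $\mathbf a$ alone, the pair is uniquely determined. I expect the only genuine obstacle to be the displayed inequality together with the remark that it rules out backtracking — this is the point at which the hypothesis $k>1$ is used — whereas the remaining bookkeeping about which integers are occupied is routine.
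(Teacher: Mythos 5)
Your proof is correct and follows essentially the same route as the paper: an induction in which the next $B$-block is forced by a greedy/mex rule and $A_{m+1}$ is then forced by \eqref{eq:1}. The one genuine improvement is that you make explicit the overshoot inequality $A_{m+1}\ge\sum_j B_{mk+j}>B_{(m+1)k}$ (using $k>1$ and $a_j\ge 1$), which is the reason the greedy block never has to be revised by a later $A$-value; the paper leaves this implicit and instead invokes Lemma~\ref{lem:2} only to note that the candidate interval of length $k$ skips at most one $A$-value, a refinement your argument does not need.
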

\begin{proof}
    We need to show that the complementary sequences $A_n$ and $B_n$ exist and
    are unique. 
    Let $\mathcal A_n=\{A_i\colon i\leq n\}$ be the initial anti-recurrences and let $\mathcal B_{kn}=\{B_j\colon j\leq kn\}$ be the initial $B$-blocks.
    By induction, we assume that \[[1,B_{kn}]\subset\mathcal A_n\cup\mathcal B_{kn}.\]
    Let \[b=\text{mex}\left(\mathcal A_n\cup \mathcal B_{kn}\right).\] The interval $[b,b+k-1]$
    has length $k$ and by Lemma~\ref{lem:2} can contain at most one number from $\mathcal A_n$.
    If it contains no such number, then $[b,b+k-1]$ must be the $B$-block from
    $B_{kn+1}$ up to $B_{k(n+1)}$ since the sequence $B_n$ consists of all numbers that are not in $A_n$. 
    If one of the numbers in $[b,b+k-1]$ is in $\mathcal A_n$, then the $B$-block from
    $B_{kn+1}$ up to $B_{k(n+1)}$
    skip that number. In any case, the next $B$-block from
    $B_{kn+1}$ up to $B_{k(n+1)}$ is uniquely determined by a mex-rule and generates $A_{n+1}$.
\end{proof}

The linear form $\mathbf a=(1,1)$ gives the anti-Fibonaccis. Its complementary sequence
\seqnum{A249031} 
\[
1,2, 4, 5, 6, 7, 8, 10, 11, 12, 14, 15, 16, 17, 19, 20, 21, 22,\ldots
\]
is the \emph{non-anti-Fibonacci sequence}. 
The proof of Lemma~\ref{lem:1} shows that the $B_n$ are defined blockwise by the mex. It
is convenient to cut up these blocks into individual parts and define the $k$ subsequences
\[B^j_n=B_{j+(n-1)k}\] 
for $j=1,\ldots,m$. For instance,
the non-anti Fibonacci sequence is divided into \seqnum{A075325} 
\[
1, 4, 6, 8, 11, 14, 16, 19, 21,\ldots
\]
and \seqnum{A047215}
\[
2, 5, 7, 10, 12, 15, 17, 20, 22,\ldots
\]
We can generate the sequences $B_n^j$ and $A_n$ simultaneously, adding the mex to each
sequence $B_n^j$ from $j=1$ to $j=m$, and then $A_n=\sum_j a_jB_n^j$. This is how
Kimberling and Moses define anti-recurrence sequences.

\medbreak
A \text{Deterministic Finite State Automaton with Output}, or DFAO, is the simplest type of
computing machine. It is able to read a finite input word and returns an output. A DFAO is a 6-tuple
\( \mathcal{A} = (Q, \Sigma, \delta, q_0, \Gamma, \lambda) \), where:
\begin{itemize}
    \item \( Q \) is a finite set of \textbf{states},
    \item \( \Sigma \) is a finite \textbf{input alphabet},
    \item \( \delta: Q \times \Sigma \to Q \) is the \textbf{transition function},
    \item \( q_0 \in Q \) is the \textbf{initial state},
    \item \( \Gamma \) is a finite \textbf{output alphabet},
    \item \( \lambda: Q \times \Sigma \to \Gamma \) is the \textbf{output function}.
\end{itemize}
For instance, there is a DFAO for the Period Doubling sequence that returns the digit ${PD}_n$ 
upon input $n$ in binary{;} see Fig.~\ref{fig:1}. According to Cobham's little theorem~\cite{shallit2022},
a DFAO corresponds to a substitution $\sigma$. To a state $a$, it assigns the word $\sigma(a)$ such
that the $j$-the digit of $\sigma(a)$ corresponds to the transition from $a$ under $j$. The DFAO
in Fig.~\ref{fig:1} 
corresponds to the Period Doubling substitution $a\mapsto ab,\ b\mapsto aa$.
\begin{figure}
    \centering
    \includegraphics[width=0.5\linewidth]{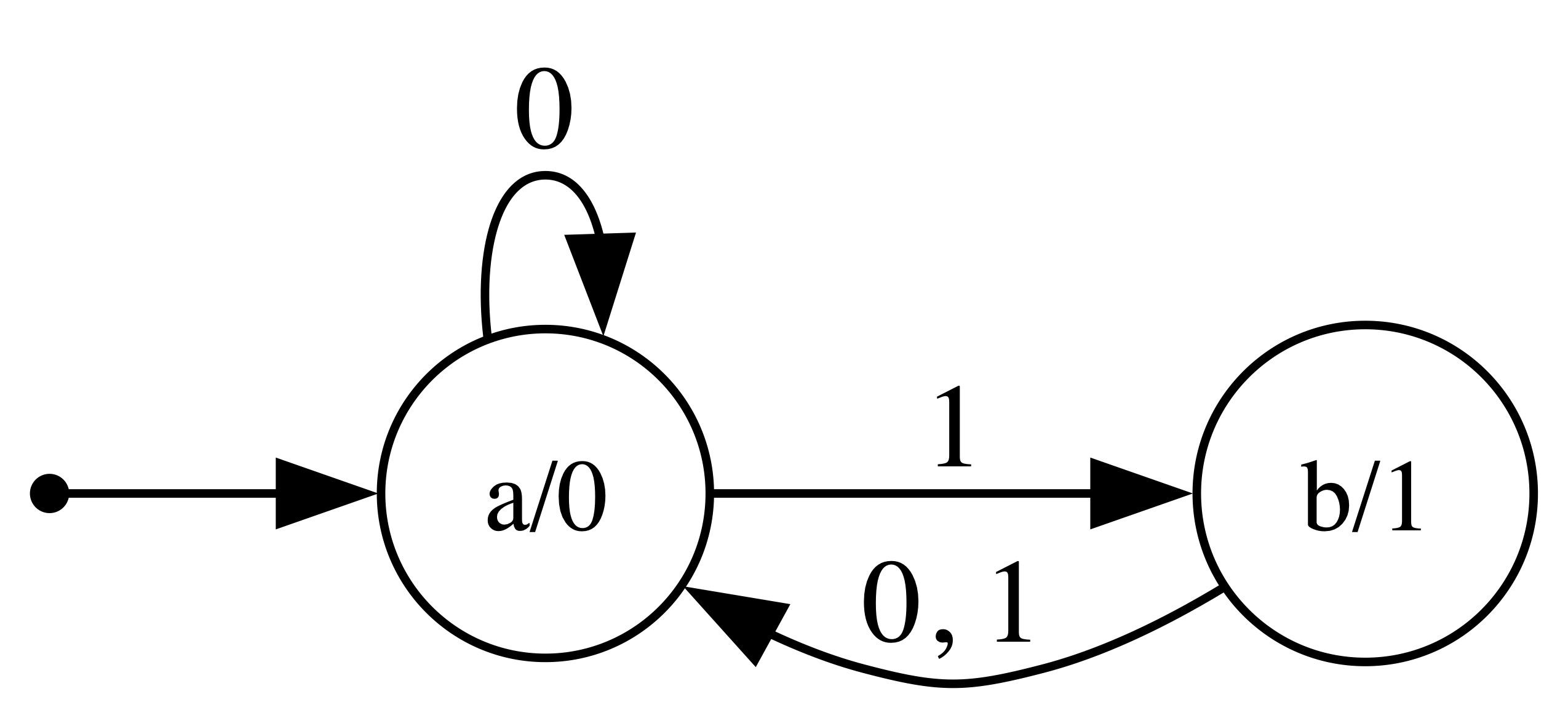}
    \caption{The automaton for the Period Doubling word $0100010101010001\cdots$. 
    Input of  $\mathtt{PD}_n$ is in binary and
    starts at $n=0$ instead of $n=1$.
    The state $a$ outputs $0$ and state $b$ outputs $1$.
    For instance, $n=9$ is expanded as $1001$ in binary and has digit $\mathtt{PD}_9=1$.}
    \label{fig:1}
\end{figure}
A {\textit{$k$-DFAO}} is an automaton with alphabet $\Sigma=\{0,1,\ldots,k-1\}$.
It reads numbers that are expanded in base $k$. Our DFAO
for the Period Doubling word is a $2$-DFAO. A sequence $X_n$ is $k$-automatic
if there exists a $k$-DFAO that gives output $X_n$ on input $n$. Similar case where $X_n$ is both a term and a sequence in one sentence.
\medbreak
We will use the automatic theorem prover \texttt{Walnut}.
It has a transparent syntax that can be easily understood, even by readers that are unfamiliar with the
software. We refer to Hamoon Mousavi's user manual~\cite{mousavi} and Jeffrey Shallit's textbook~\cite{shallit2022}
for more information.
In the words of Jeffrey Shallit~\cite{sumsets}, \texttt{Walnut} serves as a telescope to
view results that at first appear only distantly provable, and that is how we use it in this paper.
\medbreak
We now give a more precise statement of 
Conjecture \ref{conj:1}.
\begin{conjecture}\label{conj:1}
    Let $\mathbf a=(a_1,\ldots,a_k)$ be positive and integral
    of dimension $k>1$.
    Let $A_n$ be the anti-recurrence sequence for the
    linear form $f(\mathbf x)=\mathbf a\cdot\mathbf x$.
    Then $A_n-\kappa n$ is $\tau$-automatic for $\kappa=k\tau+1$
    and $\tau$ the trace of the linear form. 
\end{conjecture}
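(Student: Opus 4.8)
The plan is to reduce the statement to a purely combinatorial description of the $B$-blocks, then feed that description to \texttt{Walnut}. The starting observation is Lemma~\ref{lem:2}: if the $B$-block for $A_{n+1}$ and the $B$-block for $A_n$ are both intervals whose union is an interval, then $A_{n+1}-A_n=k\tau$ exactly; otherwise the gap is strictly larger, and by the mex-rule in the proof of Lemma~\ref{lem:1} each ``defect'' is caused by an anti-recurrence number $A_m$ that lies inside the would-be interval $[b,b+k-1]$ and must be skipped. So the deviation $A_n-\kappa n$ (with $\kappa=k\tau+1$) is a bounded-jump running count of how often, and by how much, the greedy $B$-blocks are forced to jump over earlier $A$-values. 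First I would make this precise: define $D_n=A_n-\kappa n$, show $D_{n+1}-D_n\in\{-1,\,\text{finitely many other values}\}$ using the $+1$ from $\kappa=k\tau+1$ versus the $k\tau$ of the clean case, and bound the extra contributions in terms of $\tau$ and the $a_i$. The key structural claim is that the positions where a skip occurs, and the size of each skip, are themselves governed by the sequence $A$ in a way that ``closes up'': whether $A_{m}\in[b,b+k-1]$ can be read off from lower-order data about $A$.

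Next I would set up the $\tau$-automatic framework. Here the choice of base $\tau$ in the conjecture is the crucial hint: I expect that the sequence $A_n$ itself, or equivalently the difference sequence $A_{n+1}-A_n$, becomes eventually periodic modulo the ``clean'' slope once one passes to base-$\tau$ representations, because the mex-rule that propagates the skips is a finite-state process when the carry structure is expressed in base $\tau$. Concretely I would: (i) guess (or extract from small cases, exactly as Hofstadter and Zaslavsky did for $\mathbf a=(1,1)$) a candidate $\tau$-DFAO $\mathcal M$ purporting to output $D_n=A_n-\kappa n$; (ii) express in \texttt{Walnut}'s first-order logic the defining property of the pair $(A_n,B_n)$ — that $B$ is the complement of $A$, that $A$ is strictly increasing, that $A_n=\sum_j a_j B_{(n-1)k+j}$, and that $A_0=0$ — all of which are expressible since $A$ and $B$ are synchronized sequences in base $\tau$ once the DFAO is in hand; (iii) have \texttt{Walnut} verify that the sequence produced by $\mathcal M$, after adding back $\kappa n$, satisfies exactly these defining properties, and invoke Lemma~\ref{lem:1} (uniqueness) to conclude it \emph{is} $A_n$. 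This is the same strategy already used successfully for \seqnum{A265389} and \seqnum{A299409} in \cite{walnutoeis}, now carried out uniformly in the parameters $k,\tau,\mathbf a$.

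The main obstacle, and the reason the full conjecture is only settled ``under a restriction on the linear form,'' is step (i): producing the automaton and, more seriously, proving that one exists at all for \emph{every} positive $\mathbf a$. For a fixed $\mathbf a$ the \texttt{Walnut} verification is a finite computation, but the size of the required automaton — and even whether the relevant ``skip-propagation'' process is finite-state — appears to depend delicately on the arithmetic of the coefficients $a_1,\ldots,a_k$ relative to $\tau$. I expect the restriction to be something like: the coefficients are all equal, or $\mathbf a=(1,1,\ldots,1,c)$, or more generally that the linear form is ``balanced'' enough that the interval $[b,b+k-1]$ in the mex-rule interacts with the running $A$-values in a translation-invariant way; under such a hypothesis the carry/borrow behaviour in base $\tau$ stabilizes and the synchronized automaton can be built explicitly. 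So the proof splits into a combinatorial lemma (skip positions are $\tau$-automatic, valid under the restriction) plus a mechanical \texttt{Walnut} certification (uniqueness via Lemma~\ref{lem:1}), and the honest gap is that the combinatorial lemma is not known without the restriction.
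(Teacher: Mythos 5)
You are right that the statement as given is a conjecture: the paper does not prove it in full generality either, and your closing admission that the combinatorial lemma is not known without the restriction is an accurate description of the state of affairs. But the route you propose for the part that \emph{is} provable diverges from the paper's in an essential way. Your plan is: for a given $\mathbf a$, guess a $\tau$-DFAO for $A_n-\kappa n$, then have \texttt{Walnut} certify, via the mex characterization of Lemma~\ref{lem:1}, that the synchronized sequence it defines really is the anti-recurrence sequence. That is exactly what the paper does for the individual sequences \seqnum{A304499} and \seqnum{A304502} and for the anti-$5$-bonaccis, and it is sound for any single fixed $\mathbf a$ --- but it is intrinsically per-instance: it can never yield a theorem covering an infinite family of linear forms, because step (i) (exhibiting the automaton) has to be redone, and re-verified, for each $\mathbf a$ separately.

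The paper's Theorem~\ref{main} is obtained by a different, purely combinatorial mechanism that your proposal does not reach. The restriction is not ``all coefficients equal'' or ``$(1,\dots,1,c)$'' but the inequality $A_1\le (k-1)\tau+2$ (called $A_1$-boundedness), where $A_1=\sum_j j\,a_j$ is the first anti-recurrence number. Under this hypothesis, Lemma~\ref{lem:3} shows by induction that every $A_n$ lies in the window $I_n=[\kappa(n-1)+1,\kappa n]$ and avoids its initial length-$k$ segment; consequently each $I_j$ contains exactly one anti-recurrence number and exactly $\tau$ $B$-blocks, and the $\tau$ anti-recurrence numbers generated from $I_j$ depend only on the residue $A_j\bmod\kappa$, which is confined to a set of size $2\tau-1$. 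This is literally a uniform substitution of length $\tau$ on an alphabet of $2\tau-1$ letters, and Cobham's little theorem then gives $\tau$-automaticity with no computer verification at all. So the gap in your proposal is not only the one you honestly flag (no automaton for general $\mathbf a$); it is also that your framework has no mechanism for producing the automaton uniformly even in the restricted case, which is precisely where the paper's actual idea --- confinement of $A_n$ to $I_n$ and the resulting finite-alphabet substitution --- does the work. (Your intuition that the obstruction for general $\mathbf a$ is a failure of translation invariance in the skip process is, however, consistent with the paper's example of the $4$-rusty numbers $\mathbf a=(1,4)$, where the arithmetic-progression structure of $A_{5n+1}$ breaks down only at $n=348$.)
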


We prove this conjecture under a restriction on $\mathbf{a}$ in the final
section of our paper.
Thomas Zaslavsky proved it for $\mathbf{a}=(1,1)$ and
Bosma et al. proved it
for $\mathbf a=(1,1,1)$ and $\mathbf a=(1,1,1,1)$,
naming it the Clergyman's Conjecture. 
A weak form of the conjecture says that the difference sequence $A_n-\kappa n$ is
bounded. 
In fact, this is how Kimberling and Moses formulate their conjectures,
but they do provide conjectured substitutions that generate
the specific difference sequences
in~\cite{kimberlingmoses}. We confirm these substitutions in Theorem~\ref{thm:abonacci}

\section{The anti-Pell and anti-Jacobsthal numbers}

The recurrence $X_{n+1}=2X_n+X_{n-1}$ generates the Pell numbers \seqnum{A000129} and $X_{n+1}=X_n+2X_{n-1}$ generates
the Jacobsthal numbers \seqnum{A001045}. We consider their counterparts, the anti-recurrence sequences for 
$\mathbf {a}=(1,2)$ and $\mathbf {a} = (2,1)$. Kimberling conjectured on the OEIS
that the difference sequence is bounded for these anti-recurrences. 

For $\mathbf a=(1,2)$, we get the anti-Pell numbers 
\seqnum{A304502}
\[
5, 11, 20, 26, 34, 41, 47, 53, 61, 68, 74, 83, 89, 95, 103, 110,\ldots.
\]
Here we ignore $A_0=0$. 
Observe that the subsequence $A_{3n+1}=5+21n$ forms an arithmetic progression,
in analogy of what we saw for the anti-Fibonaccis.
The differences
between consecutive numbers now show a period three
\[
6, 9, 6, 8, 7, 6, 6, 8, 7, 6, 9, 6, 6, 8, 7, \ldots
\]
with blocks $696$, $876$, and $687$. This is in line with the meta-conjecture that the
difference sequence must be $3$-automatic. On the OEIS, Kimberling conjectures that
\[
0\leq A_n-7n+3\leq 2.
\]
We apply the method of guessing an automaton as described in
~\cite[p. 75]{shallit2022} to guess a DFAO
for the difference sequence $A_n-7n$. We shift the index by one 
to comply with the convention that automatic sequences start at index~$0$
and we adjust the sequence 
to $A_{n+1}-7n-4$, to make the output alphabet $\Gamma=\{0,1,2\}$, as shown in Fig.~\ref{fig:2}.
We call this automaton \texttt{a12} and we implement the anti-Pell numbers
in \texttt{Walnut} by the command
\vspace{-0.5cm}
\begin{flushleft}
\texttt{
\newline
def a304502 "?msd\_3 (n>0) \& s=(7*n-3+a12[n-1])": 
}
\end{flushleft}
In particular, the variable $s$ is equal to $A_n$. 
\begin{figure}
    \centering
    \includegraphics[width=0.6\linewidth]{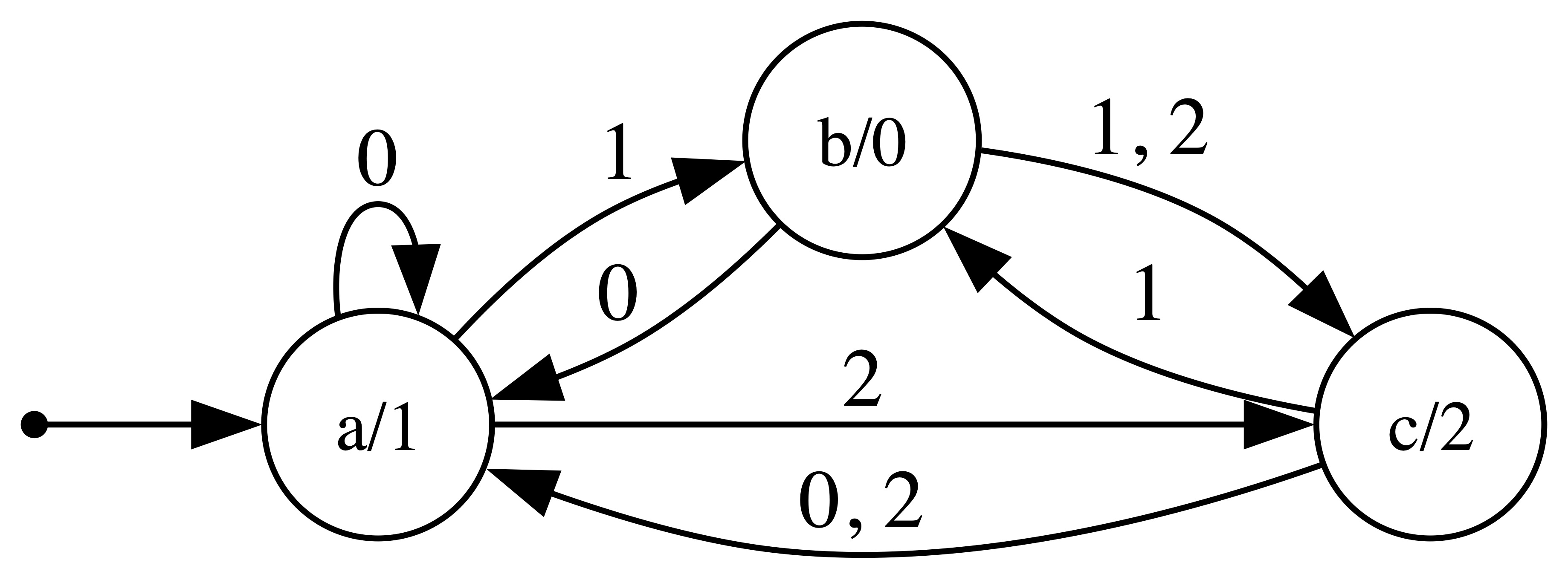}
    \caption{A 3-DFAO with output for the difference sequence $A_{n+1}-7n-4$,
    of the anti-Pell numbers.
    All inputs $n=0\pmod 3$ end in state $a$ with output $1$.
    This means that $A_{n}=7n-2$ if $n=1\pmod 3$. For instance, $A_{12}$ has $n+1=12$
    which is entered as $n=102$ in the expansion with respect to base 3. It ends
    in state $c$. Therefore, $A_{12}=7\cdot 11+6=83.$}
    \label{fig:2}
\end{figure}

To verify Kimberling's conjecture for anti-Pell numbers, we also need the two
non-anti-Pell sequences $B_n^1$ and $B_n^2$. Kimberling's full conjecture is that
\begin{equation}\label{eq:2}
\begin{aligned}
0 &\leq A_n - 7n + 3 &\leq 2, \\
0 &\leq 3B_n^1 - 7n + 6 &\leq 3, \\
0 &\leq 3B_n^2 - 7n + 2 &\leq 3.
\end{aligned}
\end{equation}
Observe that successive entries $B_n^1, B_n^2$ in a non-anti-recurrence sequence
differ by one or two, because there can be at most one anti-recurrence number in between.
Since $2B_n^2+B_n^1=A_n$, it follows that $3B_n^2=A_n+i$ for $i\in\{1,2\}$, and this can
be used to obtain the third inequality from the first. 
In the OEIS, $B_n^1$ is \seqnum{A304500} 
\[
1, 3, 6, 8, 10, 13, 15, 17, 19, 22, 24, 27, 29, 31, 33, 36,\ldots,
\]
and $B_n^2$ is \seqnum{A304501}
\[
2, 4, 7, 9, 12, 14, 16, 18, 21, 23, 25, 28, 30, 32, 35, 37,\ldots.
\]
It is possible to derive these sequences from the anti-Pell numbers.
As already observed, $B_n^2$ is equal to $(A_n+2)/3$ rounded down.
If we have $A_n$ and $B_n^2$ then we also have $B_n^1=A_n-2B_n^2$.
The non-anti-recurrence sequences are implemented by the following commands

\vspace{-0.5cm}
\begin{flushleft}
\texttt{
\newline
def a304501 "?msd\_3 Es \$a304502(n,s) \& t=(s+2)/3":
\newline
def a304500 "?msd\_3 Es,t \$a304502(n,s) \& \$a304501(n,t) \& u+2*t=s":
}
\end{flushleft}
The automata for the difference sequences $B_n^1$ and $B_n^2$ are shown in Fig.~\ref{fig:3}.

\begin{figure}[ht]
    \centering
    \begin{subfigure}{0.45\textwidth}
        \includegraphics[width=0.9\linewidth]{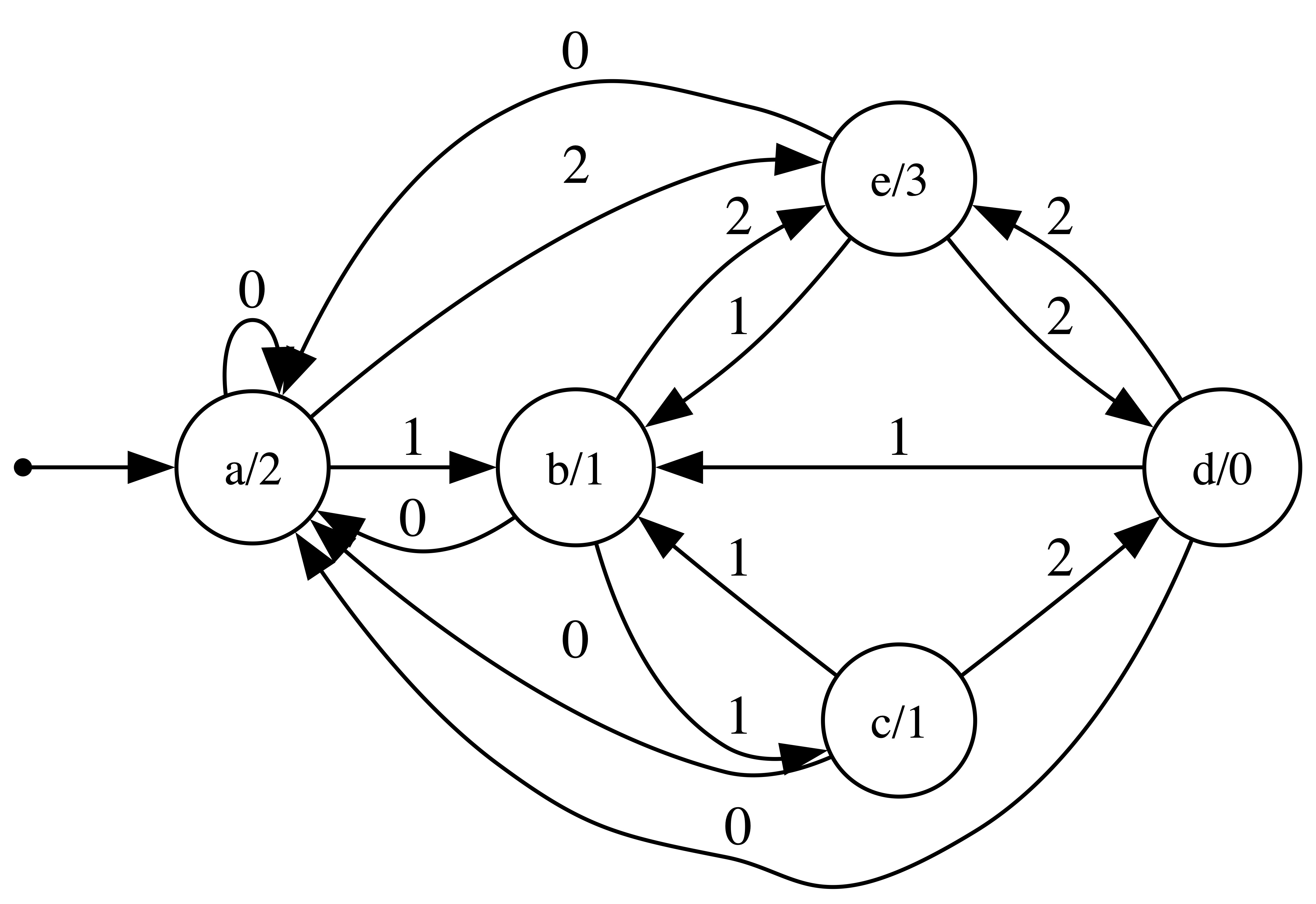}
    \end{subfigure}
    \begin{subfigure}{0.45\textwidth}
       \includegraphics[width=0.9\linewidth]{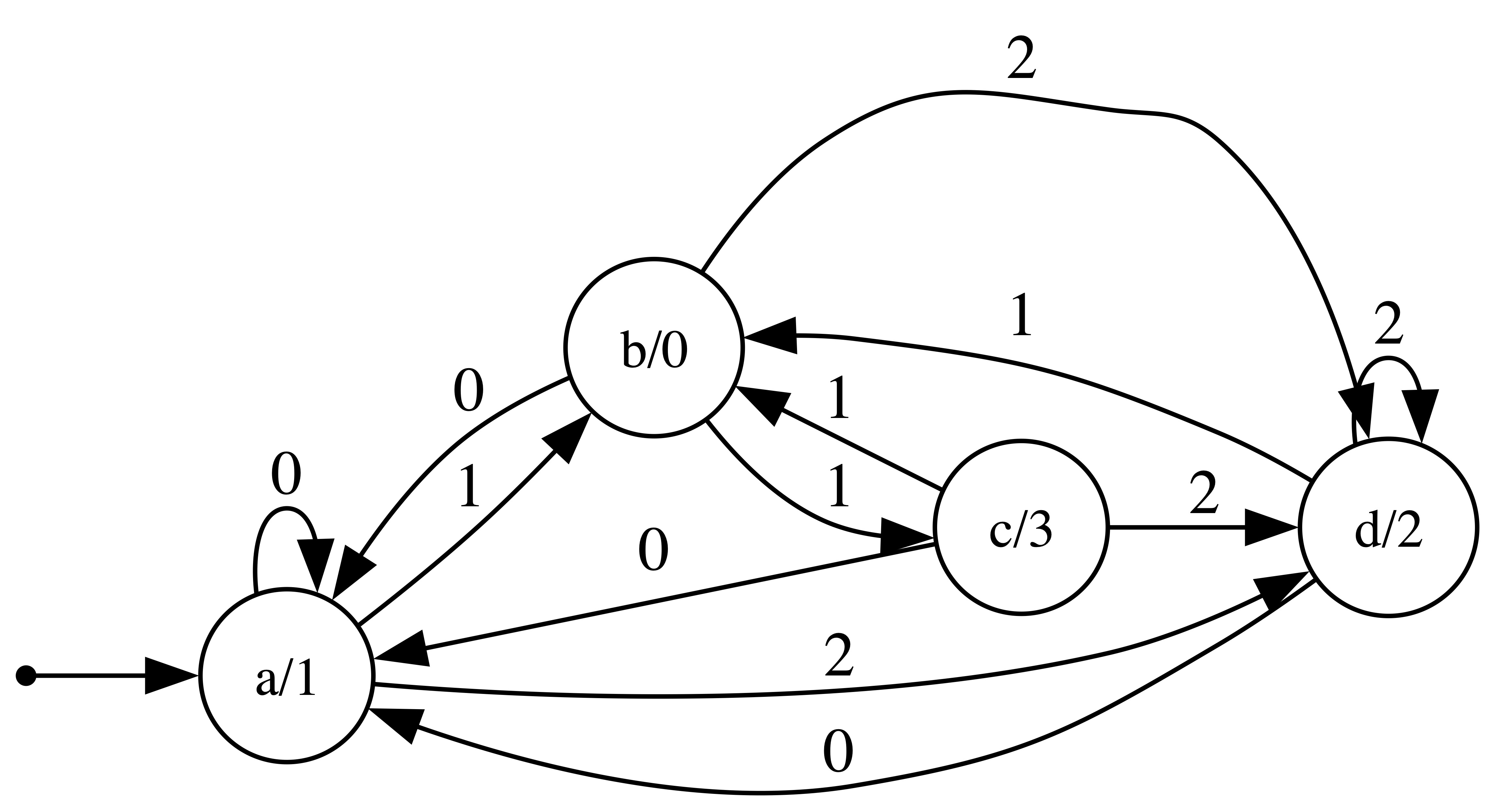} 
    \end{subfigure}
    \caption{The 3-DFAO's for the non-anti-recurrences
    $3B_{n+1}^1-7n-1$ (left) and $3B_{n+1}^2-7n-5$ (right). For instance,
    $B_{8}^2$ can be computed from the input $7$, which is $21$ in
    base 3. It ends in state $b$ with output $0$ and therefore $B_8^2=18$.
    Observe that the final digit determines the output mod 3.
    These DFAO's are from~\cite{walnutoeis}, where they were given in
    lsd form. They were converted to msd by \texttt{Walnut}.
    }
    \label{fig:3}
\end{figure}
\begin{theorem}
    The anti-Pell numbers satisfy Kimberling's bounds in Equation~\ref{eq:2}.
\end{theorem}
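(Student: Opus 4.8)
The plan is to certify, with \texttt{Walnut}, that the predicates \texttt{a304502}, \texttt{a304501}, \texttt{a304500} introduced above genuinely compute the anti-Pell numbers $A_n$ and the two non-anti-Pell subsequences $B_n^1$, $B_n^2$, and then to read the bounds in Equation~\ref{eq:2} off the output alphabets of the three automata. Since the automaton \texttt{a12} (and the two automata of Fig.~\ref{fig:3}) were produced by the guessing procedure of \cite[p.~75]{shallit2022}, a priori they match only finitely many terms; the content of the theorem is the machine-checked verification that they are correct for every index.

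For the correctness step I would invoke the uniqueness half of Lemma~\ref{lem:1}: the anti-recurrence sequence for $\mathbf a=(1,2)$ is the \emph{unique} pair of complementary strictly increasing sequences $A_n,B_n$ with $A_n=f(B_{2n-1},B_{2n})=B_n^1+2B_n^2$. Because \texttt{a304501} sets $B_n^2=\lfloor(A_n+2)/3\rfloor$ and \texttt{a304500} sets $B_n^1=A_n-2B_n^2$, the anti-recurrence identity $A_n=B_n^1+2B_n^2$ holds by construction, so it remains to check three things with \texttt{Walnut}: (i) $A_n\not\equiv 0\pmod 3$ for all $n\ge 1$ (this is exactly what makes $B_n^1<B_n^2$, and also forces $B_n^1\ge 1$); (ii) $B_n^2<B_{n+1}^1$ for all $n\ge1$, so that the interleaving $B_1^1,B_1^2,B_2^1,B_2^2,\ldots$ is strictly increasing (strict monotonicity of $A_n$ then follows automatically); and (iii) complementarity, i.e.\ every positive integer lies in exactly one of $\{A_i\}_{i\ge1}$, $\{B_i^1\}_{i\ge1}$, $\{B_i^2\}_{i\ge1}$ --- both the covering statement and the pairwise-disjointness statement. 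Each of (i)--(iii) is a single sentence in the first-order theory of $(\mathbb N,+,<)$ with the three automata adjoined, hence decidable by \texttt{Walnut}; given (i)--(iii), Lemma~\ref{lem:1} identifies our candidate with the true anti-Pell data.

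With correctness established, the bounds are immediate. The identity $A_n-7n+3=\texttt{a12}[n-1]$ built into \texttt{def a304502}, together with the fact that \texttt{a12} has output alphabet $\{0,1,2\}$, gives $0\le A_n-7n+3\le 2$. Similarly the automata of Fig.~\ref{fig:3} compute $3B_{n+1}^1-7n-1$ and $3B_{n+1}^2-7n-5$ with output alphabet $\{0,1,2,3\}$, which after the shift $n\mapsto n-1$ yields $0\le 3B_n^1-7n+6\le 3$ and $0\le 3B_n^2-7n+2\le 3$; alternatively the last inequality can be deduced from the first as remarked before Equation~\ref{eq:2}. As a sanity check one can also have \texttt{Walnut} confirm the three inequalities directly.

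The main obstacle is not any single computation --- none of (i)--(iii) is hard once the automata are in hand --- but the fact that the theorem is only as reliable as (a) the guessed automata and (b) the translation of ``$(A_n,B_n^1,B_n^2)$ is the anti-Pell data'' into the finite checklist (i)--(iii). The delicate points are phrasing complementarity (iii) correctly in \texttt{Walnut} and keeping the index shifts and the exceptional value $A_0=0$ consistent across the three definitions; once that bookkeeping is right, the proof is a matter of running the queries and recording that they return \texttt{TRUE}.
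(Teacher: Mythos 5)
Your proposal is correct and follows the same overall strategy as the paper: guess the DFAOs, certify them with \texttt{Walnut} via the uniqueness statement of Lemma~\ref{lem:1}, and then read Kimberling's bounds off the output alphabets after the index shift. The one substantive difference is the certification checklist. The paper's proof for the anti-Pell case verifies the initial values $B_1^1=1$, $B_1^2=2$, $A_1=5$ and then encodes the mex rule directly as first-order queries with an explicit existential $\exists m<n$ (``every $t<B_n^1$ already occurs among $B_m^1,B_m^2,A_m$ for some $m<n$''), plus the additive relation. You instead check non-divisibility of $A_n$ by $3$, strict monotonicity of the interleaved $B$-sequence, and complementarity (partition of $\mathbb N$); this is exactly the route the paper itself takes later for the anti-$5$-bonacci verification, so it is a legitimate alternative. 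Note, however, that Lemma~\ref{lem:1} asserts uniqueness of the pair satisfying the \emph{mex} construction, so to invoke it you still owe a one-line argument that your conditions imply the mex rule: if some $t<B_m$ were excluded from all earlier terms, complementarity and monotonicity of $B$ force $t=A_j$ with $j$ at least the index of the current block, but then $A_j=B_{2j-1}+2B_{2j}>B_{2j}\geq B_m>t$, a contradiction. With that small step supplied (or with the paper's explicit mex queries substituted), your proof is complete; without it, ``given (i)--(iii), Lemma~\ref{lem:1} identifies our candidate'' is an assertion rather than a deduction.
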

\begin{proof}
    The outputs of the DFAO's in Fig.~\ref{fig:2} and Fig.~\ref{fig:3} are within Kimberling's bounds.
    Note that we shifted $A_n-7n+3$ to $A_{n+1}-7(n+1)+3$, and likewise for $B_n^1$ and $B_n^2$, to comply with the rule that
    automatic sequences start at index $0$.
    We need to verify that these DFAO's indeed correspond to the difference sequences for the anti-Pell numbers,
    which we do with the assistance of \texttt{Walnut}.

According to Lemma~\ref{lem:1}, the sequences are determined by
their initial values and a mex rule.
We first check that the initial values are
$B_1^1=1,\ B_1^2=2, A_1=5$.
\vspace{-0.5cm}
\begin{flushleft}
\texttt{
\newline
eval test "?msd\_3 \$a304500(1,1) \& \$a304501(1,2) \& \$a304502(1,5)": 
}
\end{flushleft}
\texttt{Walnut} evaluates the statement as \texttt{TRUE}. 

We check the mex rule for $B_n^1$, which says that it is the least new number after the first $n$ have
been defined. In first-order logic, the statement is
\[
\forall n,s,t\in\mathbb N \ t<s\ \wedge\ s=B_n^1\ \implies\ \exists m<n\ (t=B_m^1\ \vee\ t=B_m^2\ \vee\ t=A_m)  
\]
In \texttt{Walnut} this statement becomes
\vspace{-0.5cm}
\begin{flushleft}
\texttt{
\newline
eval testB1 "?msd\_3 An,s,t (\$a304500(n,s) \& t>0 \& t<s) => (Em (m<n) \& (\$a304500(m,t)|\$a304501(m,t)|\$a304502(m,t)))": 
}
\end{flushleft}
It is evaluated as \texttt{TRUE}. The mex condition on $B_n^2$ is that it is the first new number after $B_n^1$.
\[
\forall n,s\in\mathbb N \ \left(s>1\ \wedge\ s=B_n^2\right)\ \implies\ \left(s-1=B_n^1\ \vee\ \exists \ m<n\ s-1=A_m\right)  \]
In \texttt{Walnut} this statement is
\vspace{-0.5cm}
\begin{flushleft}
\texttt{
\newline
eval testB2 "?msd\_3 An,s (s>1\ \&\ \$a304501(n,s)) => (\$a304500(n,s-1)|(E m (m<n) \& \$a304502(m,s-1)))": 
}
\end{flushleft}
It is evaluated as \texttt{TRUE}. The final condition is that $A_n=B_n^1+B_n^2$.
\vspace{-0.5cm}
\begin{flushleft}
\texttt{
\newline
eval testA "?msd\_3 An,s,t (\$a304500(n,s) \& \$a304501(n,t)) => \$a304502(n,s+2*t)": 
}
\end{flushleft}
It is evaluated as \texttt{TRUE}. This proves that these are indeed the anti-Pell sequence and
its non-anti-Pell counterparts. Conjecture~\ref{conj:1} holds for $\mathbf a=(1,2)$ and Kimberling's
bounds in Equation~\ref{eq:2} apply.
\end{proof}

For $\mathbf a=(2,1)$ we get the anti-Jacobsthal numbers 
\seqnum{A304499}
\[
4, 11, 19, 25, 32, 40, 46, 52, 61, 67, 74, 82, 88, 95, 103, 109,\ldots
\]
which resemble the anti-Pell numbers. 
If we divide the gaps $A_{n+1}-A_n$ between anti-recurrence numbers into blocks of
three, we now find that there are more blocks: $786$, $678$, $966$, $696$, $669$.
Again, the sum of all blocks is the same and the subsequence $A_{3n+1}$ is an arithmetic
progression.

Kimberling's conjecture for this anti-recurrence is
\begin{equation}\label{eq:2a}
\begin{aligned}
0 &\leq A_n - 7n + 4 &\leq 3, \\
0 &\leq 3B_n^1 - 7n + 6 &\leq 4, \\
0 &\leq 3B_n^2 - 7n + 2&\leq 3.
\end{aligned}
\end{equation}
In this case, the second inequality follows from the first.

\begin{figure}
    \centering
    \includegraphics[width=0.6\linewidth]{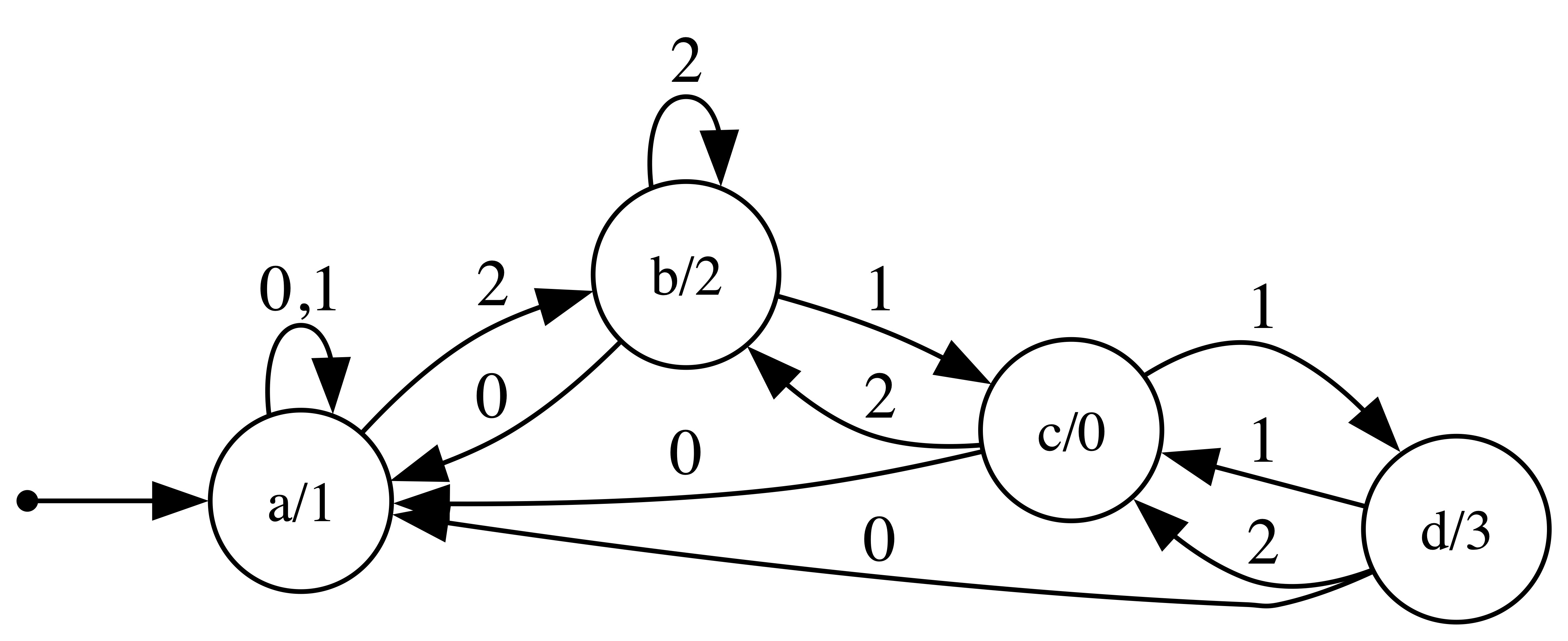}
    \caption{A 3-DFAO for the difference sequence $A_{n+1}-7n-3$
    of the anti-Jacobsthals.
    All inputs $n=0\pmod 3$ end in state $a$ with output $1$, which
    means that $A_{n}=7n-3$ if $n=1\pmod 3$.}
    \label{fig:2a}
\end{figure}
Our guessed automaton \texttt{a21} for the difference sequence of the anti-Jacobsthals is illustrated
in Fig.~\ref{fig:2a}. We use it to implement \seqnum{A304499} in \texttt{Walnut}.
\vspace{-0.5cm}
\begin{flushleft}
\texttt{
\newline
def a304499 "?msd\_3 (n>0) \& s=(7*n-4+a21[n-1])":} 
\end{flushleft}
We check that these numbers are not divisible by three.
\vspace{-0.5cm}
\begin{flushleft}
\texttt{
\newline
eval test "?msd\_3 As (En \$a304499(n,s)) => (Et (s=3*t+1 | s=3*t+2))":} 
\end{flushleft}
which is \texttt{TRUE}. 
We can define the non-anti-recurrence sequences from $A_n$.
The numbers $B_n^1$ are the rounded down $A_n/3$ and $B_n^2=A_n-2B_n^1$. 
\vspace{-0.5cm}
\begin{flushleft}
\texttt{
\newline
def a304497 "?msd\_3 Er \$a304499(n,r) => s=r/3": 
\newline
def a304498 "?msd\_3 Eq,r (\$a304497(n,q) \& \$a304499(n,r)) => s=r-2*q": 
}
\end{flushleft}
We have defined our candidate sequences in \texttt{Walnut}. We still need to
satisfy that our DFAO does indeed produce the right numbers.

\begin{theorem}
    The anti-Jacobsthal numbers satisfy Kimberling's bounds in Equation~\ref{eq:2a}.
\end{theorem}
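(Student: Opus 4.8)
The plan is to replay, with the coefficient vector $(1,2)$ replaced by $(2,1)$, the argument just given for the anti-Pell numbers. First, the automaton \texttt{a21} of Fig.~\ref{fig:2a} produces the sequence $A_{n+1}-7n-3$ and has output alphabet $\{0,1,2,3\}$; after the index shift this is exactly the assertion $0\le A_n-7n+4\le 3$, the first line of Equation~\ref{eq:2a}. The second line then follows from the first with no further computation: since $B_n^1=\lfloor A_n/3\rfloor$ and $A_n\not\equiv 0\pmod 3$ (which \texttt{Walnut} confirms), we have $3B_n^1=A_n-i$ with $i\in\{1,2\}$, so $3B_n^1-7n+6=(A_n-7n+4)+(2-i)$ lies in $[0,4]$. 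Alternatively one lets \texttt{Walnut} minimise the automata for \texttt{a304497} and \texttt{a304498} and reads off the output alphabets directly.

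Second, by Lemma~\ref{lem:1} the triple $A_n,B_n^1,B_n^2$ is uniquely determined by its initial values together with the mex rule, so it suffices to check in \texttt{Walnut} that the candidate sequences \texttt{a304499}, \texttt{a304497}, \texttt{a304498} satisfy: (i) the initial values $B_1^1=1$, $B_1^2=2$, $A_1=4$; (ii) for every $n$, every positive $t<B_n^1$ occurs among $\{A_m,B_m^1,B_m^2:m<n\}$, i.e.\ $B_n^1$ is the mex of the earlier terms; (iii) $B_n^2-1$ equals $B_n^1$ or some earlier $A_m$, i.e.\ $B_n^2$ is the next unused value after $B_n^1$; and (iv) the anti-recurrence relation, which for $\mathbf a=(2,1)$ reads $A_n=2B_n^1+B_n^2$. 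These are the same first-order predicates as \texttt{test}, \texttt{testB1}, \texttt{testB2}, \texttt{testA} in the anti-Pell proof, with \texttt{s+2*t} replaced by \texttt{2*s+t}; each should evaluate to \texttt{TRUE}. Together with Lemma~\ref{lem:1} this certifies that \texttt{a304499}, \texttt{a304497}, \texttt{a304498} really encode the anti-Jacobsthal sequence and its two non-anti-Jacobsthal components, and hence that the bounds of Equation~\ref{eq:2a} hold.

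The only genuine work lies upstream, in producing a correct DFAO \texttt{a21} via the guessing procedure of~\cite[p.~75]{shallit2022}. Because the gap sequence here decomposes into the five blocks $786$, $678$, $966$, $696$, $669$ rather than the three blocks of the anti-Pell case, the minimal automaton is larger and one must feed the guesser enough terms for it to stabilise; the subsequent \texttt{Walnut} checks then either certify the guess or expose it as wrong. I expect no conceptual obstacle here---this is the anti-Pell argument with different constants---so the main risk is purely computational: getting the guessed automaton exactly right and setting up the predicates with the correct index shifts.
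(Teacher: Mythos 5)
Your proposal follows essentially the same route as the paper: guess the DFAO \texttt{a21}, define $A_n$, $B_n^1$, $B_n^2$ from it in \texttt{Walnut}, certify them via the initial values, the mex conditions, and the relation $A_n=2B_n^1+B_n^2$ (invoking Lemma~\ref{lem:1} for uniqueness), and then read off the bounds. The only small point to tighten is the third line of Equation~\ref{eq:2a}: your $3B_n^1=A_n-i$ argument handles the second line but gives only $3B_n^2-7n+2\in[0,5]$ for the third, which therefore needs its own direct \texttt{Walnut} check (as the paper does with its final \texttt{testB2} evaluation) rather than being deduced from the first line.
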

\begin{proof}
We verify, in exactly the same way as for the anti-Pell numbers, 
that these sequences satisfy the criterion
of Lemma~\ref{lem:1}, starting with the initial conditions
\vspace{-0.5cm}
\begin{flushleft}
\texttt{
\newline
eval test "?msd\_3 \$a304497(1,1) \& \$a304498(1,2) \& \$a304499(1,4)": 
}
\end{flushleft}
which is \texttt{TRUE}. 

We test the \texttt{mex} conditions for the non-anti-recurrence sequences.
\vspace{-0.5cm}
\begin{flushleft}
\texttt{
\newline
eval testB1 "?msd\_3 An,s,t (\$a304497(n,t) \& s>0 \& s<t) => \newline (Em (m<n) \& (\$a304497(m,s)|\$a304498(m,s)|\$a304499(m,s)))": 
\newline
eval testB2 "?msd\_3 An,s (s>1\ \&\ \$a304498(n,s)) => (\$a304497(n,s-1)|(E m (m<n) \& \$a304499(m,s-1)))": 
}
\end{flushleft}
and we test that the additive relation $A_n=2B_n^1+B_n^2$ holds
\vspace{-0.5cm}
\begin{flushleft}
\texttt{
\newline
eval testA "?msd\_3 An,s,t (\$a304497(n,s)\&\$a304498(n,t))=>\$a304499(n,2*s+t)":
}
\end{flushleft}
These are all \texttt{TRUE} and therefore the anti-Jacobsthals satisfy Conjecture~\ref{conj:1}.
To verify Kimberling's bounds in Equation~\ref{eq:2a}, we only
need to verify the first and third inequality.
\vspace{-0.5cm}
\begin{flushleft}
\texttt{
\newline
eval testA \hspace{0.2cm}"?msd\_3 An,s \$a304499(n,s) => (7*n <= s+4 \& s+4 <= 7*n+3)":
}
\texttt{
\newline
eval testB2 "?msd\_3 An,s \$a304498(n,s) => (7*n <= 3*s+2 \& 3*s+2 <= 7*n+3)":
}
\end{flushleft}
which is \texttt{TRUE}.
\end{proof}
This settles Kimberling's conjectures on \seqnum{304499} and \seqnum{304502}. 

\section{The anti-bonacci numbers}

The recurrence relation for $\mathbf{a}=(1,1,\ldots,1)$ given by
\[
X_{n}=X_{n-1}+\cdots+X_{n-k}
\]
starting from the initial conditions $X_n=0$ for $n\leq 0$ and $X_1=1$ 
produces the $k$-bonacci numbers. Apparently, they were first
introduced in~\cite{kbonacci} under the name of $k$-generalized
Fibonacci numbers.
The most familiar cases are the Tribonacci numbers for $k=3$ and
the Tetrabonacci numbers for $k=4$.
Their anti-recurrent counterparts are the anti-Tribonacci sequence
\seqnum{A265389} 
\[
6, 16, 27, 36, 46, 57, 66, 75, 87, 96, 106, 117, 126, 136, 147, 156, \ldots,
\]
and
the anti-Tetrabonacci sequence \seqnum{A299409}
\[
10, 26, 45, 62, 78, 94, 114, 130, 146, 162, 180, 198, 214, 230, 248, \ldots.
\]
Kimberling conjectured bounds on these two sequences that were verified 
by Bosma et al. in~\cite{walnutoeis}
by using \texttt{Walnut}. 
In particular, Bosma et al. showed that
the anti-$k$-bonacci sequence is a sum of a linear sequence and 
a $k$-automatic sequence for $k=3$ and $k=4$. However,
the automata for the difference sequences in~\cite{walnutoeis}
are not that easy to interpret. The automaton for $k=4$
has 10 states, for instance, and that is because
these automata were given in lsd format. 
If we reverse them to msd format, we get much cleaner
machines as shown in Fig.~\ref{fig:A111}
and~\ref{fig:A1111}. Both these DFAO's, as well as the DFAO for
the anti-Fibonacci that we saw earlier, satisfy the following properties:
\begin{itemize}
    \item The number of states is equal to $k$.
    \item The outputs are unique.
    \item All transitions on input $0$ lead back to the initial state.
\end{itemize}
The third property is equivalent to the fact that the subsequence
$A_{kn+1}$ forms
an arithmetic progression with increment $k\kappa=k^3+k$. 

\begin{figure}[h]
    \centering
    \begin{subfigure}[b]{0.4\textwidth}
        \centering
        \includegraphics[width=\textwidth]{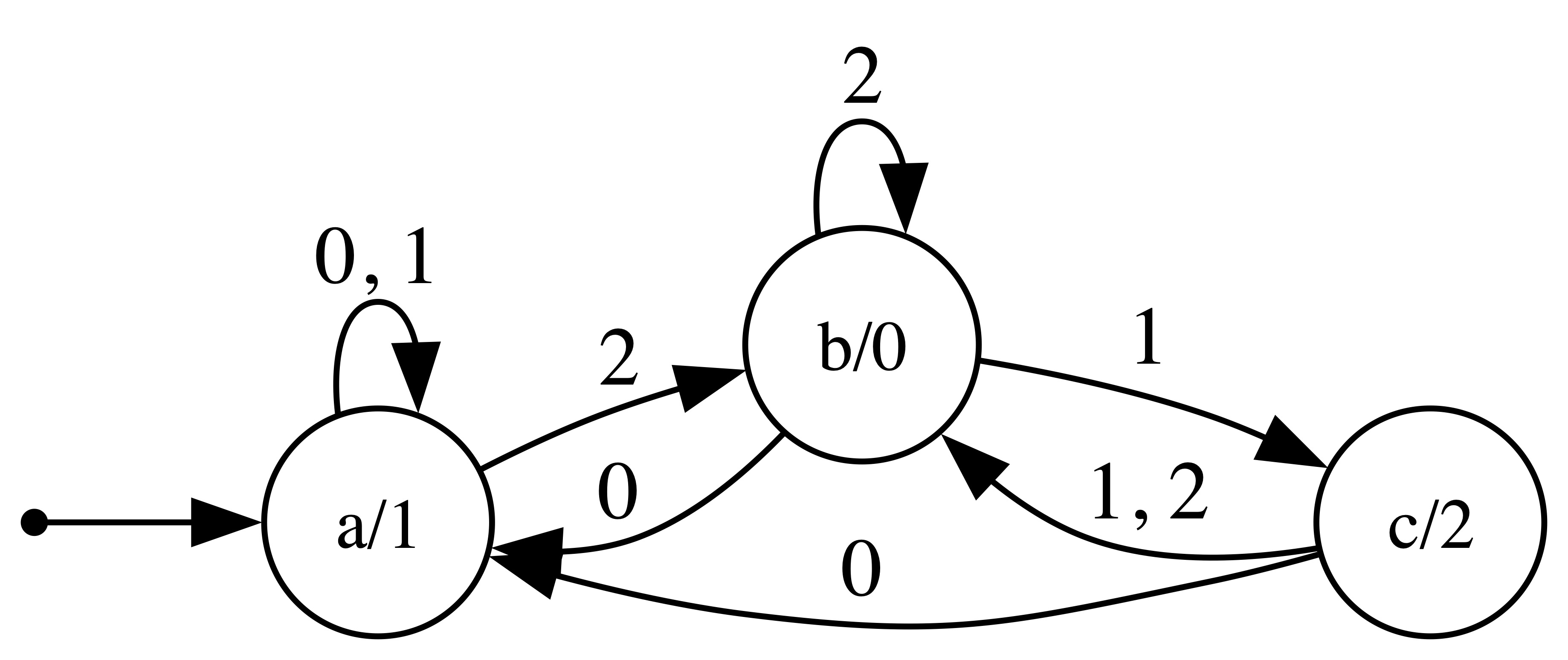}
        \caption{The DFAO for $\mathbf{a}(1,1,1)$}
        \label{fig:A111}
    \end{subfigure}
    \hfill
    \begin{subfigure}[b]{0.5\textwidth}
        \centering
        \includegraphics[width=\textwidth]{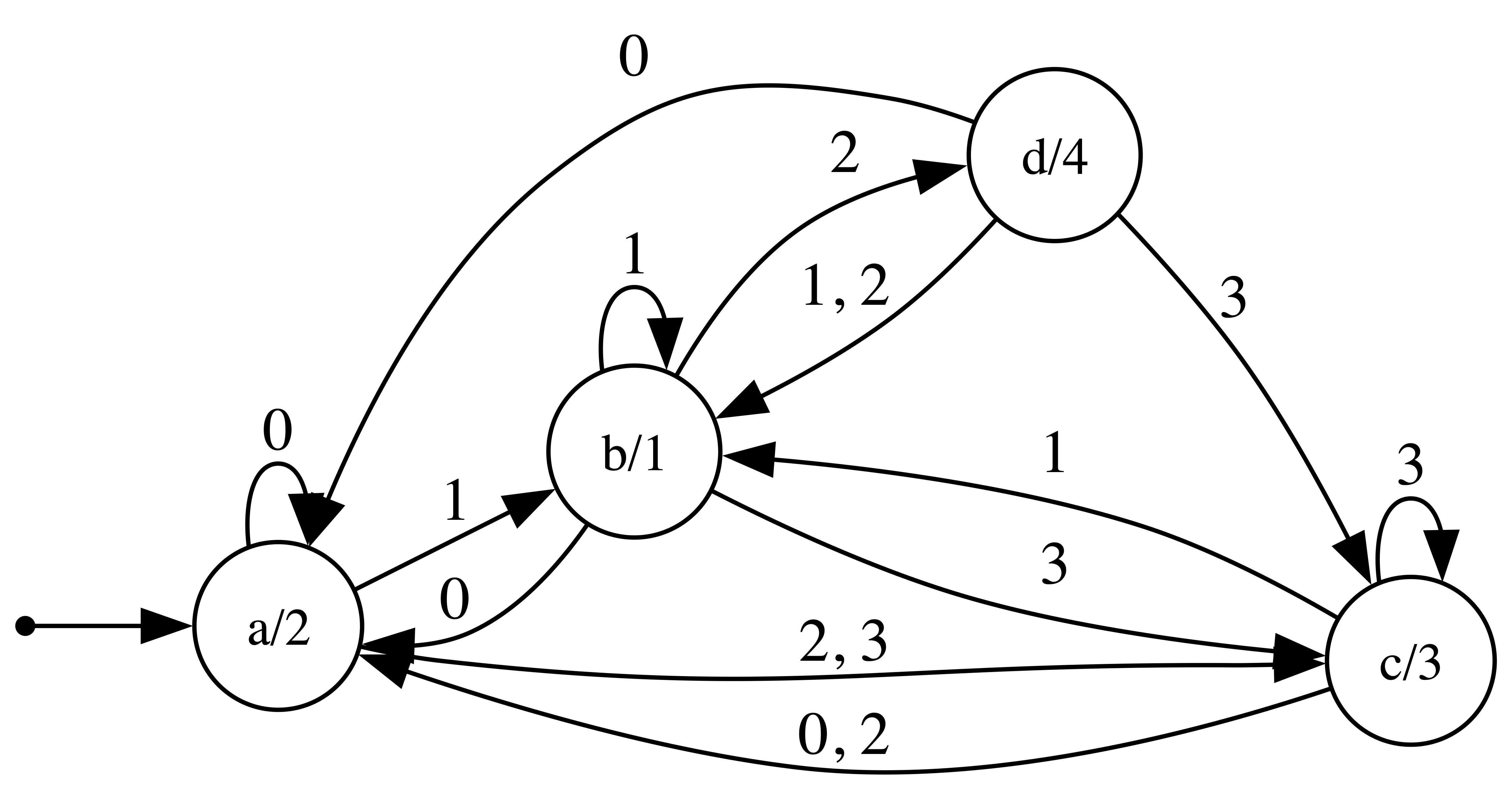}
        \caption{The DFAO for $\mathbf{a}(1,1,1,1)$}
        \label{fig:A1111}
    \end{subfigure}
    \caption{The 3-DFAO for the difference sequence $A_{n+1}-10n-5$ for the anti-Tribonacci sequence
    in $(a)$ and $A_{n+1}-17n-8$ for the anti-Tetrabonacci sequence in $(b)$.
    For instance, in figure $(a)$, $A_{15}=147$ and the input for the automaton is $112$, with output 2. 
    The numbers with output $1$ for the DFAO in $(a)$ correspond to the positions of $0$ in Stewart's choral sequence~\seqnum{A116178}. The automaton in $(a)$ was
    conjectured by Kimberling and Moses~\cite{kimberlingmoses}.
    The automaton in $(b)$ corresponds to the substitution $a\mapsto 21\bar a3$ where
    $\bar a=5-a$, by Cobham's little theorem.
    }
    \label{fig:4}
\end{figure}

The $5$-bonacci numbers, or Pentanacci numbers, are \seqnum{A145029}, but the anti-$5$-bonacci numbers have not yet been entered in the OEIS. The initial numbers are
\[
  15 ,   40 ,   66 ,   95 ,  120 ,  145 ,  170 ,  197 ,  225 ,  250 ,  275 ,  300 ,  327 ,  355 ,  380, \ldots.
\]
We have guessed the automaton \texttt{a11111} for the anti-$5$-bonacci sequence
\[
A_n-26n+13
\]
as illustrated in Fig.~\ref{fig:A11111}. It is possible to check with \texttt{Walnut} that this DFAO does indeed produce the
anti-bonacci sequence for $k=5$. The properties
of the DFAO's that we observed for $k=2,3$ and $4$ are again satisfied. We will now prove
that these hold for all $k$-antibonaccis.
\begin{figure}[h]
    \centering
    \includegraphics[width=0.5\linewidth]{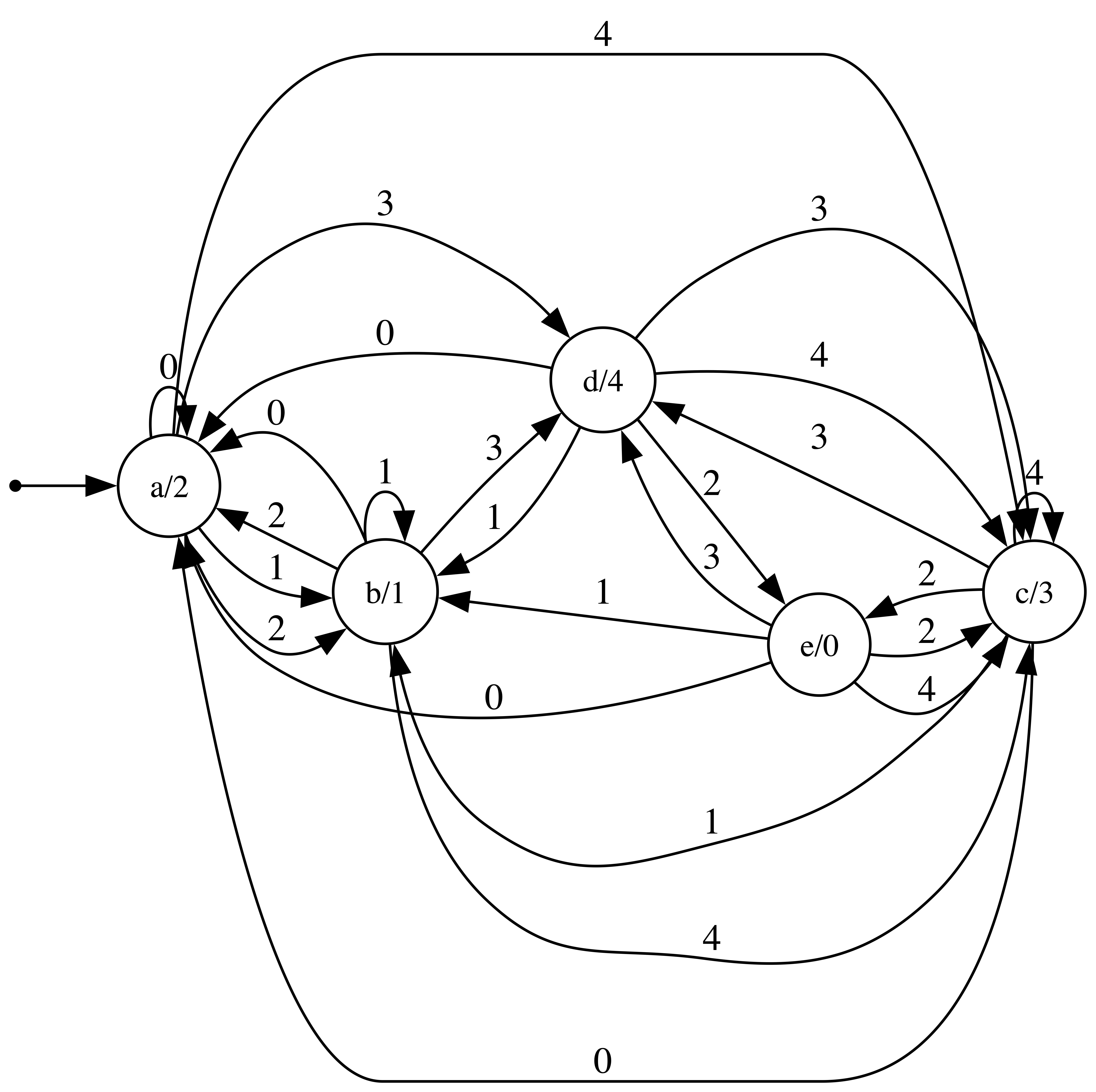}
    \caption{The 5-DFAO for the difference sequence 
    $A_{n+1}-26n-13$ of the anti-$5$-bonacci numbers. If we identify the states $a$ to $e$
    with their outputs $0$ to $4$, then by Cobham's little theorem this DFAO corresponds
    to the substitution $x\mapsto 21y43$ for $y=3-x\hspace{-0.2cm}\mod 5$ except for $x=4$
    and for which $4\mapsto 21033$.}
    \label{fig:A11111}
\end{figure}

\medbreak
We fix $k$ and denote the anti-$k$-bonacci numbers by $A_n$ without
including $k$ in the notation. The difference sequence is $A_n-\kappa n$
with $\kappa=k^2+1$.
$A_1$ is equal to the triangular number 
$t_k=1+2+\cdots+k=\binom{k+1}2$.
We consider consecutive intervals of length $k^2+1$
\[
I_n=[(n-1)\kappa+1,n\kappa].
\]
Recall that the set $\{B_m^1,\ldots,B_m^k\}$ is the \emph{$B$-block} that generates $A_m$.
We will show that each $I_n$ contains one anti-bonacci $A_{n}$ and $k$ such $B$-blocks. We can thus
associate $A_{n}$ to $I_n$, which generates $k$ anti-bonaccis $A_{k(n-1)+j}$ for $j=1,\ldots,k$.
That gives a substitutive rule for the anti-bonaccis. Since $I_n$ contains only one anti-recurrence,
at most one of the blocks is not an interval. Lemma~\ref{lem:2} implies that at least $k-2$ of
the $A_{k(n-1)+j}$ are $k^2$ apart. Modulo $\kappa$, the next anti-recurrence decreases by $1$
if this is the case. The first $k$ numbers of $I_n$ form the first $B$-block, which explains
why the $A_{kn+1}$ form an arithmetic progression. The following lemma makes this precise.

\begin{lemma}\label{lem:an}
    Let $i=\lfloor\frac {k}2\rfloor$ and $\kappa=k^2+1$ for $k>2$. 
    Then $A_n\in I_n$ and \[A_n=ik+a_n\hspace{-0.2cm}\mod \kappa\]
    for some $1\leq a_n\leq k$ if $k$ is even and $i+1\leq a_n\leq i+k$
    if $k$ is odd.
\end{lemma}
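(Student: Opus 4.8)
The plan is to prove, by induction on $n$, a statement stronger than the lemma that also records the internal structure of $I_n$. Write $A_n=(n-1)\kappa+r_n$ with $1\le r_n\le\kappa$, so that the congruence asserted in the lemma is equivalent to $r_n$ lying in the window $W_k$, defined as $[ik+1,\,ik+k]$ for $k$ even and $[ik+i+1,\,ik+i+k]$ for $k$ odd; a short arithmetic check gives $W_k\subseteq[k+1,\,k^2-k+1]$ for every $k>2$. The strengthened claim is that $A_n\in I_n$ with $r_n\in W_k$, and that $I_n$ is the disjoint union of $\{A_n\}$ with the $k$ consecutive $B$-blocks $\{B^1_m,\dots,B^k_m\}$ for $m=k(n-1)+1,\dots,kn$, read from left to right, of which the first is the interval $[(n-1)\kappa+1,\,(n-1)\kappa+k]$, the last is the interval $[n\kappa-k+1,\,n\kappa]$, and at most one---the block whose nominal range of $k$ positions would contain position $r_n$---fails to be an interval, skipping exactly $A_n$.

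For the base case the mex rule of Lemma~\ref{lem:1} gives $\{B^1_1,\dots,B^k_1\}=\{1,\dots,k\}$, so $A_1=t_k=\binom{k+1}{2}$ lies in $I_1=[1,\kappa]$, and $r_1=t_k$ equals $ik+i$ when $k=2i$ and $ik+k$ when $k=2i+1$, in both cases inside $W_k$. For the inductive step I assume the claim for $I_1,\dots,I_{n-1}$; this determines the blocks $\{B^j_m\}$ for all $m\le k(n-1)$, hence the values $A_1,\dots,A_{k(n-1)}$ and in particular $A_n$. Since consecutive anti-bonaccis are at least $k^2>k$ apart by Lemma~\ref{lem:2}, any interval of length $\kappa$ meets at most one anti-bonacci; together with $A_m\in I_m$ for $m<n$ and the disjointness of the $I_m$, this forces the only anti-bonacci in $I_n$ to be $A_n$ itself. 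Running the mex rule of Lemma~\ref{lem:1} then fills $I_n$ from the left with consecutive integers, skipping only $A_n$, and because $r_n\in W_k\subseteq[k+1,k^2-k+1]$ the point $A_n$ avoids both the first $k$ and the last $k$ positions of $I_n$; hence the first and last of the $k$ resulting blocks are the claimed intervals and at most the block straddling position $r_n$ is a non-interval.

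It remains to locate the $k$ anti-bonaccis $A_{k(n-1)+1},\dots,A_{kn}$ generated inside $I_n$, which carries the induction forward for those indices. Using the layout above I compute each $A_{k(n-1)+j}$ as the sum of its generating block: an interval block contributes an arithmetic-series sum that reduces cleanly modulo $\kappa=k^2+1$ via $k^2\equiv-1$, while the straddling block contributes the sum of its enclosing $(k+1)$-term interval minus $A_n$. Carrying this out, the $\ell=\lfloor(r_n-1)/k\rfloor$ blocks lying left of $A_n$ give positions $t_k-(j-1)$ (indexing by $j=1,\dots,\ell$), the blocks lying right of $A_n$ give positions $t_k+k-(j-1)$, and the straddling block, if present, gives $t_k+k+1-r_n+\ell(k-1)$. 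One then checks---splitting on the parity of $k$ and on whether $r_n\equiv1\pmod k$, which decides whether a straddling block exists---that all of these positions lie in $W_k$; this uses $r_n\in W_k$ and the exact location of $W_k$ in an essential way, and in particular yields $A_{n+1}\ge n\kappa+1$, which is precisely the fact invoked above to guarantee that no anti-bonacci we have not yet located can intrude into $I_n$.

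I expect the main obstacle to be this last verification. One must track how the position $r$ evolves as $j$ runs through the $k$ blocks of $I_n$---decreasing by $1$ through the left blocks, then jumping at the straddling block, then decreasing by $1$ again through the right blocks---and confirm that the single jump never carries $r$ above the top of the width-$k$ window $W_k$. The estimate is tight: equality with an endpoint of $W_k$ does occur (already for $k=3$), and it is exactly here that the precise value $i=\lfloor k/2\rfloor$, and hence the placement of the window, is forced.
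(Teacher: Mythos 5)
Your proposal is correct and follows essentially the same route as the paper: an induction whose strengthened hypothesis records the block structure of each $I_n$ (one anti-bonacci, $k$ blocks, at most one non-interval, first and last blocks intervals), followed by an explicit computation of the residues modulo $\kappa$ of the $k$ anti-bonaccis generated by $I_n$ and a check that they stay in the window. The only cosmetic difference is that you compute each residue directly as a block sum and split cases on $r_n\bmod k$, whereas the paper works with the first-difference pattern $k^2,\dots,k^2,x,2k^2+k-x,k^2,\dots,k^2$ and splits on the parity of $k$ and the size of $a_j$; the resulting formulas agree.
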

\begin{proof}
    We have $A_1=t_k\in [1,\kappa]=I_1$.
    If $k$ is odd, then $t_k=ik+k$ and if it
    is even, then $t_k=ik+\frac  k2$. So $a_1=k$ if $k$ is odd
    and $a_1=\frac k2$ if it is even. The initial interval
    $I_1$ contains $k$ $B$-blocks, all of which are intervals except for one.
    The possible exception is either the $i+1$-st block or the $i+2$-nd block. 
    That is our inductive hypothesis. 

    A $B$-block determines an anti-bonacci, and therefore each $I_j$
    determines $k$ anti-bonaccis.  By Lemma~\ref{lem:2},
    if blocks are consecutive intervals,
    then they determine anti-bonaccis that are $k^2=\kappa-1$ apart. Modulo $\kappa$,
    the next anti-bonacci decreases by $1$.
    If they are not consecutive intervals, then the anti-bonaccis
    are further apart.
    Suppose that $A_j=ik+a_j$ for $1\leq a_j\leq k$.
    If $a_j=1$, then the $i+1$-st block in $I_j$ is an interval
    that generates an anti-bonacci $A$ that
    is equal to $k^2+k$ plus the previous anti-bonacci.
    If $1<a_j\leq k$ then the $i+1$-st block in $I_j$ is not an interval.
    It generates an anti-bonacci $A$ that is $k^2+k+1-a_j$
    plus the previous anti-bonacci. The next anti-bonacci is $k^2+a_j-1+A$.
    If $1\leq a_j\leq k$ then
    the interval $I_j$ generates $k$ anti-bonaccis with first differences
    \begin{equation}\label{eq:block}
    \overbrace{k^2 \cdots k^2}^{i-1}
    \ x(2k^2 + k - x) \
    \overbrace{k^2 \cdots k^2}^{k - i - 1}
    \end{equation}
    with $x=k^2+k+1-a_j$.
    For instance, if $k=4$ then $i=2$ and the differences are
    $16,x,36-x,16$ for $x$ in between 17 and 20. 
    The final difference is $k^2$, since the final block of $I_j$
    and the first block of $I_{j+1}$ are consecutive intervals.
    Note that the total sum of the first differences is $k\kappa$
    and therefore $A_{jk+1}=A_{(j-1)k+1}\hspace{-0.2cm}\mod \kappa$.

    From these first differences we can compute $a_{(j-1)k+1},\ldots, a_{jk}$
    for the anti-bonaccis that are generated from $I_j$. 
    In Equation~\ref{eq:block} the initial $i-1$ differences 
    are $-1$ mod $\kappa$. These are followed by $k+1-a_j$ and $a_j-2$ mod $\kappa$, followed by
    $k-i-1$ differences $-1$. If $1\leq a_j\leq k$ then
    \[
    a_{(j-1)k+\ell} = 
    \left\{
        \begin{array}{lll}
            a_1 + 1 - \ell & \text{if } 1 \leq \ell \leq i, \\
            a_1 + 1 - i + k - a_j & \text{if } \ell = i+1, \\
            a_1 + 1 + k - \ell & \text{if } i + 2 \leq \ell \leq k.
        \end{array}
    \right.
    \]
The $i+1$-st entry is the only one that depends on $a_j$. 
If $k$ is even, we have
$i=\frac k2=a_1$, and so
    \begin{equation}\label{eq:k-even}
    a_{(j-1)k+\ell} = 
    \left\{
        \begin{array}{ll}
            i + 1 - \ell & \text{if } 1 \leq \ell \leq i, \\
            k + 1  - a_j & \text{if } \ell = i+1, \\
            k + 1 - (\ell-i) & \text{if } i + 2 \leq  \ell \leq k.
        \end{array}
    \right.
    \end{equation}
These numbers are between $1$ and $k$. Therefore, if $k$ is even then our inductive
hypothesis implies that the first $kn$ values of $a_j$ are in between $1$ and $k$.
This settles the result for $k$ even. 

For $k$ is even, we have $a_1=\frac k2$.
For $k$ is odd, we have a larger value $a_1=k$. The $A$'s can be in the $i+1$-st
and $i+2$-nd block for odd $k$. In particular, this is the case if $a_j>k$. The first differences then are
    \begin{equation}\label{eq:block}
    \overbrace{k^2 \cdots k^2}^{i}
    \ x(2k^2 + k - x) \
    \overbrace{k^2 \cdots k^2}^{k - i - 2}
    \end{equation}
with $x=k^2+2k+1-a_j$. As before, we can compute the $a$'s from these differences.
    \[
    a_{(j-1)k+\ell} = 
    \left\{
        \begin{array}{ll}
            a_1 + 1 - \ell & \text{if } 1 \leq \ell \leq i+1, \\
            a_1 - i + 2k - a_j & \text{if } \ell = i+2, \\
            a_1 + k + 1 - \ell & \text{if } i + 3 \leq \ell \leq k.
        \end{array}
    \right.
    \]
If $k$ is odd, then $a_1=k$ and $i=\frac{k-1}2$. If $a_j\leq k$ then we get
    \begin{equation}\label{eq:k-odd1}
    a_{(j-1)k+\ell} = 
    \left\{
        \begin{array}{lll}
            k + 1 - \ell & \text{if } 1 \leq \ell \leq i, \\
            k + i+ 2 - a_j & \text{if } \ell = i+1, \\
            2k + 1 - \ell & \text{if } i + 2 \leq \ell \leq k.
        \end{array}
    \right.
    \end{equation}
These numbers are in $[i+2,i+k]$. If $a_j>k$ then we get
    \begin{equation}\label{eq:k-odd2}
    a_{(j-1)k+\ell} = 
    \left\{
        \begin{array}{ll}
            k + 1 - \ell & \text{if } 1 \leq \ell \leq i+1, \\
            2k + i + 1 - a_j & \text{if } \ell = i+2, \\
            2k + 1 - \ell & \text{if } i + 3 \leq \ell \leq k.
        \end{array}
    \right.
    \end{equation}
These numbers are in $[i+1,i+k]$.
\end{proof}

Equations~\ref{eq:k-even} and~\ref{eq:k-odd1}, ~\ref{eq:k-odd2} for the remainders $a_n$
allow us to generalize Zaslavsky's formula from $k=2$ to $k>2$. The following result
settles conjecture~\ref{conj:1} for anti-bonaccis.

\begin{theorem}\label{thm:abonacci}
    Let $i=\lfloor\frac {k}2\rfloor$.
    There exists a $k$-uniform substitution $\sigma$ on $\{1,2,\ldots,k\}$ if $k$ is even
    and $\{0,1,\ldots,k-1\}$ if $k$ is odd
    with unique fixed
    point $\omega=(i_n)$ such that
    \[
    A_n=\kappa(n-1)+t_k-i+i_{n-1}.
    \]    
    All $\sigma(j)=w_j$ have initial digit $i$ and therefore $\lim_{n\to\infty}\sigma^n(i)=\omega$. 
\end{theorem}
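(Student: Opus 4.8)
The plan is to read off the substitution $\sigma$ directly from the recursion for the remainders $a_n$ established in Lemma~\ref{lem:an}, and then verify that the stated closed form for $A_n$ follows by telescoping. Concretely, set $i_n = a_n - 1$ when $k$ is even (so $i_n$ ranges over $\{0,\ldots,k-1\}$, which after relabelling is $\{1,\ldots,k\}$ as in the statement) and $i_n = a_n - i - 2$ when $k$ is odd (so $i_n$ ranges over $\{0,\ldots,k-1\}$); the precise index shift is whatever makes the alphabet match the statement, and I would pin it down once and for all at the start. The content of Lemma~\ref{lem:an}, read through Equations~\ref{eq:k-even}, \ref{eq:k-odd1}, \ref{eq:k-odd2}, is exactly that the block $(a_{(j-1)k+1},\ldots,a_{jk})$ is a function of the single value $a_j$ alone. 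That is the defining property of a $k$-uniform substitution: define $\sigma(a_j) = (a_{(j-1)k+1},\ldots,a_{jk})$, viewed as a word of length $k$ over the remainder alphabet. Lemma~\ref{lem:an} guarantees this word again lies in the correct alphabet, so $\sigma$ is well defined on $\{1,\ldots,k\}$ (resp.\ $\{0,\ldots,k-1\}$).

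Next I would check the fixed-point claim. From the explicit formulas, every image word $w_j = \sigma(j)$ has the same first letter, namely $a_{(j-1)k+1} = a_1 + 1 - 1 = a_1$, which equals $\frac k2$ for $k$ even and $k$ for $k$ odd; under the chosen relabelling this common first letter is $i$. Hence each $\sigma(j)$ begins with the letter $i$, so $\sigma(i)$ is a proper prefix of $\sigma^2(i)$, which is a proper prefix of $\sigma^3(i)$, and so on; the limit $\omega = \lim_{n\to\infty}\sigma^n(i)$ exists and is the unique fixed point of $\sigma$ containing $i$ as its initial letter. Uniqueness of the fixed point follows because $\sigma$ is $k$-uniform and non-erasing with a letter ($i$) that maps to a word beginning with itself, a standard fact I would cite rather than reprove. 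Identifying $\omega$ with the actual sequence $(i_n)_{n\ge 1}$ (re-indexed to start at $0$ to match the automatic-sequence convention) is then just the observation that $(i_n)$ satisfies the same block recursion with the same seed $i_1 \leftrightarrow a_1$, so $(i_n) = \omega$ by induction on blocks.

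Finally, the closed form. By Lemma~\ref{lem:an}, $A_n \equiv ik + a_n \pmod{\kappa}$, and by Lemma~\ref{lem:2} together with the block-difference computation in Equation~\ref{eq:block}, the increments $A_{n+1}-A_n$ sum to exactly $k\kappa$ over each block, so $A_n - \kappa(n-1)$ is determined mod $\kappa$ by $a_n$ and is trapped in a window of width less than $\kappa$ (the remainders $a_n$ range over an interval of length $k < \kappa$). Since $A_1 = t_k = ik + a_1$, the affine relation $A_n - \kappa(n-1) = t_k - i + i_{n-1}$ holds for $n=1$ and propagates: both sides jump by the same amount across each block (namely by the block of first differences in Equation~\ref{eq:block}, which mod $\kappa$ is exactly the step $i_{n-1}\mapsto i_n$ read off in the lemma), so equality for all $n$ follows by induction. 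I would present this as a short induction rather than a computation.

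The main obstacle is bookkeeping rather than mathematics: getting the index and alphabet shifts consistent across the three cases (even $k$; odd $k$ with $a_j\le k$; odd $k$ with $a_j>k$) so that a single substitution $\sigma$ on one fixed alphabet captures all of Equations~\ref{eq:k-even}--\ref{eq:k-odd2}, and confirming that the ``boundary'' letters of each image word glue correctly to the next block (the final difference being $k^2$, i.e.\ the step $-1$ mod $\kappa$, which must be consistent with $a_{jk}$ and $a_{jk+1}$). Once the relabelling is fixed, every remaining step is a one-line induction on blocks.
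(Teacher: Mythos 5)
Your proposal follows the paper's proof essentially verbatim: both read the $k$-uniform substitution directly off Equations~\ref{eq:k-even}--\ref{eq:k-odd2} of Lemma~\ref{lem:an}, observe that every image word begins with the letter $i$ to get the unique fixed point, and obtain the closed form from $A_n\in I_n$ together with $A_n\equiv ik+a_n\pmod\kappa$ and the identities $ik=t_k-i$ (even $k$) and $ik=t_k-k$ (odd $k$). The only discrepancy is your placeholder relabelling for odd $k$, which should be $i_{n-1}=a_n-k+i=a_n-i-1$ rather than $a_n-i-2$ (and for even $k$ only an index shift, $i_{n-1}=a_n$, is needed); you flagged this as bookkeeping to be pinned down, and it is.
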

\begin{proof}
    By Lemma~\ref{lem:an}, $A_n=\kappa(n-1)+ik+a_n$.
    If $k$ is even, then $ik=t_k-i$, and so we get
    $A_n=\kappa(n-1)+t_k-i+a_n$ for numbers $a_n\in[1,k]$. Equation~\ref{eq:k-even} describes a $k$-substitution
    $a_j\mapsto w$ where the digits $w_\ell$ are given by $a_{(j-1)k+\ell}$, which is
    independent on $a_j$, except if $\ell=i+1$, when the digit is $k+1-a_j$.
    The initial digit of each substitution word is equal to $i$.
    We write $i_{n-1}=a_n$ to comply with the rule that automatic sequences
    start with index $0$. 
    
    If $k$ is odd, then $ik=t_k-k$, and so we get that  $A_n=\kappa(n-1)+t_k-k+a_n$
    for numbers $a_n\in [i+1,i+k]$. 
    If we write $i_{n-1}=a_n-k+i$ then we get that $A_n=\kappa(n-1)+t_k+i+i_{n-1}$.
    Since $k=2i+1$ if $k$ is odd, $i_{n-1}\in[0,k-1]$.
    Equations~\ref{eq:k-odd1} and~\ref{eq:k-odd2} 
    describe a $k$-substitution $a_j\mapsto w_j$ with initial digit $a_1$. 
    Then we get that $A_n=\kappa(n-1)+t_k-i+i_{n-1}$ and the initial
    digit of the substitution words is~$i$. This confirms our observations on
    the DFAO's for the anti-bonaccis.
\end{proof}

\section{Rusty numbers and other anti-recurrences}

The recurrence
\[
X_{n+1}=dX_n+X_{n-1}
\]
produces the so-called metallic or metallonacci numbers~\cite{metallic2, metallic}. 
It is impossible to resist the temptation to say that the $A_n$
for the linear form $\textbf{a}={(1,d)}$ are the \emph{rusty numbers}.
The $3$-rusty numbers are
\[
7,    15,    23,    35,    43,    51,    62,    71,    79,    87,    99,   107,   115,   123,   131, 142, 151, \ldots
\]
We can guess a 4-DFAO for its difference sequence, which is illustrated in Fig.~~\ref{fig:a13}. The \texttt{Walnut} verification for the anti-Pell and anti-Jacobsthal sequences can also be applied to this DFAO, to check that it indeed
produces the difference sequence. Note that all $0$-transitions lead back to the initial state, which implies that the subsequence $A_{4n+1}$ is an arithmetic progression. However, this does not apply to the $4$-rusty numbers. 
\[
    9,    19,    29,    39,    54,    64,    74,    84,    98,   109,   119,   129,   139,   154,   164, 174, 184,\ldots
\]
The subsequence $A_{5n+1}$ is equal to the arithmetic progression $A_1+55 n$ up until $n=348$, when $A_{1741}\not=9+55\cdot 348$. The metallic numbers are well-studied and share many of the properties of the
Fibonacci numbers. Suprisingly, it is a challenge to prove, or disprove, the conjecture for the rusty numbers.

The general quadratic recurrence $X_{n+1}=pX_n+qX_{n-1}$ with arbitrary initial values produces the
Horadam numbers~\cite{horadam}. We will show that the anti-Horadam numbers have an automatic difference
sequence if $p\leq 2$.
\begin{figure}
    \centering
    \includegraphics[width=0.7\linewidth]{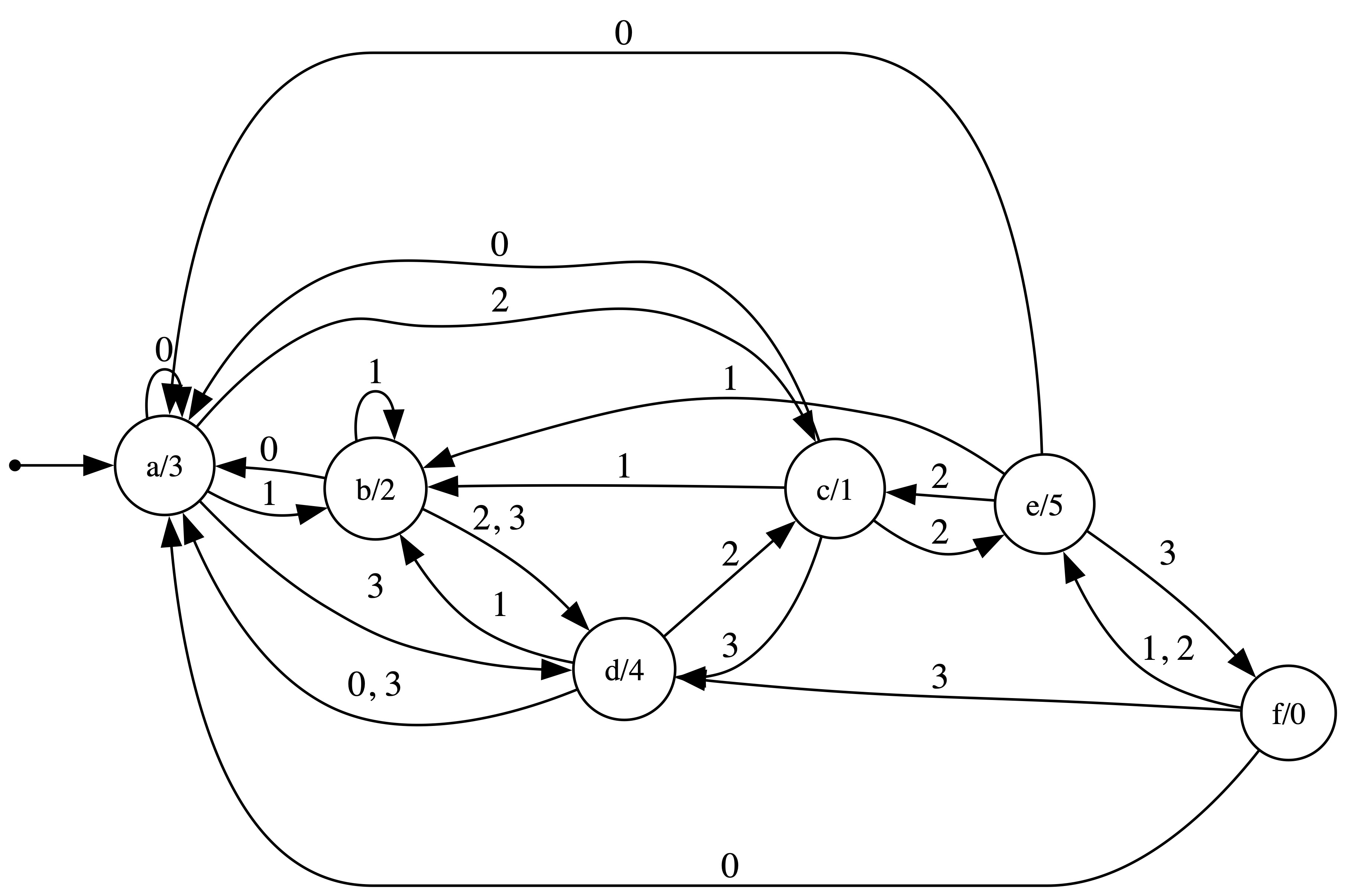}
    \caption{A $4$-DFAO for the difference sequence $A_n-9n-4$ for
    the linear form $\mathbf{a}=(1,3)$.}
    \label{fig:a13}
\end{figure}

\begin{definition}
    A positive linear form $\mathbf a$ of dimension $k$ and trace $\tau$
    is $A_1$-bounded if \[A_1\leq (k-1)\tau+2,\]
    where $A_1$ is the first anti-recurrence number defined by $\mathbf a$.
\end{definition}
The linear form $\mathbf a=(a_1,a_2)$ of the anti-Horadam numbers is $A_1$-bounded
if $a_2\leq 2$.

\begin{lemma}\label{lem:ineq}
    $\tau+t_{k-1}\leq A_1$ and the inequality is strict if $A_1$ is not
    anti-Fibonacci.
\end{lemma}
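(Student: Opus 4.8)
The plan is to compute $A_1$ explicitly and then reduce the lemma to a one-line inequality on the coefficients $a_j$. First I would pin down the first $B$-block. Since $A_1 = \sum_{j=1}^{k} a_j B_j$ is a sum of positive multiples of distinct positive integers, we have $A_1 \ge 1 + 2 + \cdots + k = t_k > k$ for $k > 1$; consequently, at the moment the first $B$-block is formed no anti-recurrence value lies in $\{1,2,\ldots,k\}$, so the mex construction used to prove Lemma~\ref{lem:1} forces $B_j = j$ for $1 \le j \le k$. Hence $A_1 = \sum_{j=1}^{k} j\,a_j$, which in particular recovers $A_1 = t_k$ in the anti-bonacci case $\mathbf a = (1,\ldots,1)$.

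The heart of the proof is then the elementary identity obtained by subtracting. Writing $\tau = \sum_{j=1}^{k} a_j$ and $t_{k-1} = \sum_{j=1}^{k-1} j = \sum_{j=2}^{k}(j-1)$, one has
\[
A_1 - \tau - t_{k-1} \;=\; \sum_{j=1}^{k} j a_j \;-\; \sum_{j=1}^{k} a_j \;-\; \sum_{j=2}^{k}(j-1) \;=\; \sum_{j=2}^{k}(j-1)\,(a_j-1).
\]
Every summand on the right is nonnegative because $a_j \ge 1$, which yields $\tau + t_{k-1} \le A_1$.

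For the strictness I would read off the equality case from the same identity: the right-hand sum vanishes exactly when $a_j = 1$ for every $j \in \{2,3,\ldots,k\}$, the coefficient $a_1$ being immaterial since its term carries the factor $j-1 = 0$. So equality is confined to the linear forms $(a_1,1,1,\ldots,1)$, and for every other $\mathbf a$ the strict inequality $\tau + t_{k-1} < A_1$ holds; in dimension two the only such form with $a_1 = 1$ is $\mathbf a = (1,1)$, whose $A_1 = 3$ is the first anti-Fibonacci number, so if $A_1$ is not anti-Fibonacci the inequality is strict. There is no genuine obstacle in any of this: the only step that needs a moment's care is the identification of the first $B$-block with the interval $[1,k]$, and that rests entirely on the crude bound $A_1 \ge t_k > k$, which guarantees that no anti-recurrence number can pre-empt the first $k$ outputs of the mex rule; everything after that is the displayed one-line computation.
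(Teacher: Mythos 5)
Your derivation of the main inequality is correct and is essentially the paper's own argument: after justifying $B_j=j$ for $1\le j\le k$ (which the paper takes for granted), you compute $A_1=\sum_{j=1}^k ja_j=\tau+\sum_{j=1}^k(j-1)a_j\ge\tau+t_{k-1}$. Your extra care about the first $B$-block is harmless and arguably an improvement, since the block is produced by the mex rule before any anti-recurrence value exists, so $B_j=j$ is automatic.

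The problem is the strictness claim, and your identity $A_1-\tau-t_{k-1}=\sum_{j=2}^k(j-1)(a_j-1)$ is exactly the right tool to see it. You correctly read off that equality holds precisely for the forms $(a_1,1,\ldots,1)$ with $a_1$ arbitrary, but your concluding sentence only inspects the single member $(1,1)$ of that family and then declares the lemma proved; that is a non sequitur, because you must also rule out every other form with $a_2=\cdots=a_k=1$ whose $A_1$ fails to be anti-Fibonacci. In fact no argument can close this gap: for $\mathbf a=(2,1)$ (the anti-Jacobsthal case treated in Section~2) one has $A_1=4$, $\tau+t_1=3+1=4$, so equality holds even though $4$ is not an anti-Fibonacci number; likewise $\mathbf a=(1,1,1)$ gives $A_1=6=\tau+t_2$ with $6$ not anti-Fibonacci. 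So the strictness assertion in the lemma is false as stated, and the paper's own justification (``strict if $k>2$ or one of the $a_j>1$'') contradicts the equality characterization your identity provides. The correct statement is that the inequality is strict if and only if $a_j>1$ for some $j\ge 2$. Fortunately only the non-strict inequality is invoked later (in the proof of Lemma~\ref{lem:3}, via $A_1-\tau+1\ge t_{k-1}+1\ge k$), so nothing downstream depends on the strictness clause; but you should either prove the corrected equality criterion or drop the anti-Fibonacci claim rather than assert it from a single example.
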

\begin{proof}
    The inequality follows from
    $A_1=\sum_{j=1}^k ja_j=\tau+\sum_{j=1}^k (j-1)a_j\geq \tau+t_{k-1}.$
    This inequality is strict if $k>2$ or one of the $a_j>1$.
\end{proof}

\begin{lemma}\label{lem:3}
    Let $\mathbf{a}$ be an $A_1$-bounded linear form.
    Then $A_n\in I_n=[\kappa(n-1)+1,\kappa n]$, with
    $\kappa=k\tau+1$. The initial  
    $B$-block of each $I_n$ is an interval.
\end{lemma}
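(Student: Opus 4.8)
The claim has two parts: (i) the anti-recurrence number $A_n$ always lands in the interval $I_n = [\kappa(n-1)+1, \kappa n]$ of length $\kappa = k\tau+1$, and (ii) the very first $B$-block inside each $I_n$ is a genuine interval (consecutive integers). These should be proved together by induction on $n$, in the spirit of Lemma~\ref{lem:an} for the anti-bonaccis, but now using the $A_1$-boundedness hypothesis instead of the explicit triangular-number computation. The base case $n=1$ is where the hypothesis enters directly: $A_1 = \sum_j j a_j$, and $A_1$-boundedness says $A_1 \le (k-1)\tau + 2 = \kappa - \tau + 1 \le \kappa$, while Lemma~\ref{lem:ineq} gives $A_1 \ge \tau + t_{k-1} \ge \tau + \binom{k-1}{2} \ge \tau \ge 1$ (and in any case $A_1 \ge 1$), so $A_1 \in [1,\kappa] = I_1$. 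The first $B$-block of $I_1$ consists of the smallest $k$ natural numbers not yet claimed by the anti-recurrence; since $A_1 \ge \tau \ge k$ — actually we want $A_1 > k$, which holds because $A_1 = \tau + \sum_j (j-1)a_j \ge \tau + t_{k-1} > k$ once $k \ge 2$ and the form is not degenerate — wait, for $\mathbf a = (1,1)$ we have $A_1 = 3 > 2 = k$, so the first block $\{1,2\}$ is indeed an interval; in general the first block is $[1,k]$ as long as $A_1 > k$, and that follows since $A_1 \ge t_k = \binom{k+1}{2} > k$ for $k>1$ by the computation in Lemma~\ref{lem:ineq}'s proof (actually $A_1 \ge \tau + t_{k-1}$, and $\tau \ge k$, so $A_1 \ge k + t_{k-1} > k$). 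So the base case is clean.

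For the inductive step, assume $A_j \in I_j$ for all $j \le n$ and that the first $B$-block of each such $I_j$ is an interval. The key structural fact — exactly as in the anti-bonacci argument — is that $I_n$ contains precisely one anti-recurrence number (namely $A_n$) and exactly $k$ complete $B$-blocks, which are the blocks generating $A_{k(n-1)+1}, \ldots, A_{kn}$ under the subscripting of~\eqref{eq:1}. This follows from counting: $I_n$ has $\kappa = k\tau + 1$ integers; by Lemma~\ref{lem:2} any interval of length $\le \kappa$ contains at most $\lceil (\kappa-1)/(k\tau) \rceil = 1$ anti-recurrence number (since consecutive $A$'s are $\ge k\tau$ apart), hence at least $\kappa - 1 = k\tau$ of the integers in $I_n$ are $B$-values, and since each $B$-block uses $\tau$ "units of room" in the sense that... — here I need to be a little careful, because a $B$-block occupies $k$ consecutive-or-near-consecutive positions but its $f$-value jumps by $\tau$ times the span; the right bookkeeping is: going from the $B$-block generating $A_m$ to the one generating $A_{m+1}$, the value $A_{m+1} - A_m \ge k\tau$ with equality iff both blocks are intervals whose union is an interval (Lemma~\ref{lem:2}). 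So over one period of length $\kappa$ we "use up" exactly $k$ blocks, consuming $k \cdot k\tau = k\kappa - k$ of value... this is precisely the mechanism from Lemma~\ref{lem:an}, and I would transcribe that argument, replacing $k^2+1$ by $k\tau+1$ and $k^2$ by $k\tau$ throughout.

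Concretely: the first $B$-block of $I_n$ is, by the mex rule of Lemma~\ref{lem:1}, the interval starting at $\mathrm{mex}$ of everything defined so far, which by induction is $\kappa(n-1)+1$; this block is $[\kappa(n-1)+1,\ \kappa(n-1)+k]$ provided $A_{n-1}$ — the unique anti-recurrence in $I_{n-1}$, sitting at position $\le \kappa(n-1)$ — does not interrupt it, which is automatic since $A_{n-1} \le \kappa(n-1) < \kappa(n-1)+1$. So the first block of $I_n$ is always an interval, giving part (ii) and simultaneously the start of the analysis inside $I_n$. Then, since there is at most one non-interval block among the $k$ blocks of $I_n$ (because a block fails to be an interval only when an anti-recurrence number falls inside its span, and only $A_n$ is available), the same difference-pattern computation as in~\eqref{eq:block} shows the $k$ anti-recurrences generated by $I_n$ have first differences summing to $k\kappa$, whence $A_{kn+1} \equiv A_{k(n-1)+1} \pmod{\kappa}$, and more importantly that $A_{n+1}$ — the first anti-recurrence generated from $I_{n+1}$ — lands in $I_{n+1}$. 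Tracking the running total: $A_{n+1} = A_1 + (\text{sum of first differences over }I_1,\ldots,I_n) = A_1 + nk\kappa$... no — $A_{n+1}$ is $A_{kn+1}$ in the other indexing only when $n$ is a multiple of $k$; the cleanest statement is simply that each $I_m$ contributes exactly one anti-recurrence number, they are enumerated in order, and $A_m \in I_m$, which is what the difference bookkeeping delivers.

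**Main obstacle.** The delicate point is confirming that $I_n$ contains *exactly* one anti-recurrence number and *exactly* $k$ complete $B$-blocks, with no block straddling the boundary between $I_n$ and $I_{n+1}$ — i.e., that the period-$\kappa$ structure is rigid. For the anti-bonaccis this was handled by the explicit first-difference pattern~\eqref{eq:block}, whose total is forced to be $k\kappa$; in the general $A_1$-bounded case I expect the same to work, but one must verify that the non-interval block's "defect" is small enough that the block still fits inside the single period — this is exactly where $A_1 \le (k-1)\tau + 2$ is needed, to bound how far a non-interval block can be displaced (an anti-recurrence number of size up to roughly $\kappa$ sitting inside a block of span $\le k$ forces the block's $f$-value up by a controlled amount). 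Pinning down this quantitative estimate — showing the defect never pushes $A_{n+1}$ out of $I_{n+1}$ — is the crux; everything else is the mex-rule bookkeeping already rehearsed in Lemmas~\ref{lem:2}, \ref{lem:1}, and~\ref{lem:an}.
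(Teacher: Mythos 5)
Your skeleton (induction on $n$; base case from $A_1$-boundedness plus Lemma~\ref{lem:ineq}; period-$\kappa$ bookkeeping of first differences) matches the paper's, and your base case is correct. But there are two genuine problems. First, a counting error: an interval $I_n$ of length $\kappa=k\tau+1$ contains $k\tau$ $B$-values, and since each $B$-block has $k$ members this is $\tau$ blocks generating $A_{\tau(n-1)+1},\dots,A_{\tau n}$ --- not ``$k$ blocks generating $A_{k(n-1)+1},\dots,A_{kn}$'' as you write. The two coincide only for the anti-bonaccis, where $\tau=k$; your transcription of Lemma~\ref{lem:an} carries that special case over without adjustment, and the arithmetic ``$k\cdot k\tau=k\kappa-k$'' that follows from it is wrong in general (the correct total of the $\tau$ first differences over one period is $\tau\kappa$, forced by $A_{j\tau+1}\equiv A_1\pmod\kappa$).

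Second, and more seriously, the step you defer as ``the crux'' is the actual content of the lemma, and your separate argument for part (ii) quietly assumes it. You argue that the first block of $I_n$ is the interval $[\kappa(n-1)+1,\kappa(n-1)+k]$ because $A_{n-1}\le\kappa(n-1)$ cannot interrupt it --- but the danger is not $A_{n-1}$, it is $A_n$ itself landing among the first $k$ positions of $I_n$. Excluding that requires exactly the quantitative estimate you postpone: every anti-recurrence generated by $I_j$ lies in $[A_1-\tau+1,\,A_1+\tau-1]$ modulo $\kappa$, because the $\tau$ consecutive first differences within a period are each $\equiv -1\pmod\kappa$ except for at most two, and they sum to $0$ modulo $\kappa$. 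Then Lemma~\ref{lem:ineq} gives $A_1-\tau+1\ge t_{k-1}+1\ge k$ (keeping $A_n$ out of the initial $k$ slots of $I_n$, which is part (ii)), and $A_1$-boundedness gives $A_1+\tau-1\le k\tau+1=\kappa$ (keeping $A_n$ inside $I_n$, which is part (i)). Without carrying out this estimate the induction does not close, so as written the proposal proves the lemma only for the anti-bonacci case already handled by Lemma~\ref{lem:an}.
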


\begin{proof}
     By induction. For $A_1$ we need to prove that
     $k\leq A_1\leq \kappa$.
     The left-hand inequality follows from $A_1\geq\tau+t_{k-1}\geq k+t_{k-1}$. The right-hand
     inequality is immediate.
     
     By our inductive assumption, each $I_j$ contains one anti-recurrence and $\tau$ blocks.
     Therefore, each $I_j$ generates $\tau$ anti-recurrence numbers, and from our inductive
     hypothesis we can generate $n\tau$ recurrence numbers. We only need to check $A_{n+1}$.
     The initial block of $I_j$ is an interval which generates $A_{(j-1)\tau+1}=A_1+(j-1)\kappa$. 
     This gives the familiar arithmetic progression. 

     If $B$-blocks are consecutive intervals, then they generate $A_h$ and $A_{h+1}$ such that
     $A_{h+1}-A_h=\kappa-1$ by Lemma~\ref{lem:2}. 
     Modulo $\kappa$, the next number $A_{h+1}$ reduces by one. 
     Since there is only one anti-recurrence
     number, at least $\tau-1$ of the blocks are intervals, and at least $\tau-2$ of these are 
     consecutive to a preceding block that is an interval. 
     There is one $B$-block that is either not an interval, or not consecutive to a preceding block.
     The latter happens if the anti-recurrence number is between two $B$-blocks. In that case,
     $A_{h+1}-A_h=(k+1)\tau=\kappa+\tau-1$. 
     There are at most $\tau - 1$ reductions by one for the $\tau$ anti-recurrence numbers that
     are generated by $I_j$. There are at most $2$ increases. 
     The interval $I_j$ generates $\tau$ anti-recurrences, which have $\tau-1$ differences. If
     we include the first anti-recurrence of $I_{j+1}$, then we get $\tau$ differences. The total
     sum of these differences is zero, since $A_{j\tau+1}=A_1\hspace{-0.2cm}\mod\kappa$. It
     follows that each anti-recurrence $A_h$ that is generated by $I_j$ is in the range
     $[A_1-\tau+1,A_1+\tau-1]$ modulo $\kappa.$ By Lemma~\ref{lem:ineq} we have that
     $A_1-\tau+1\geq t_{k-1}+1\geq k$. By $A_1$-boundedness we have that $A_1+\tau-1\leq k\tau+1=\kappa$.
     The numbers $A_h$ that are generated by $I_j$ are contained in $I_h$ and are not in its
     initial interval of length $k$. In particular, $A_{n+1}$ meets the required conditions.
\end{proof}

It follows from the proof of this lemma that the subsequence $A_{n\tau+1}$ is an arithmetic
progression if $\mathbf a$ is $A_1$-bounded.

\begin{theorem}\label{main}
    If $\mathbf a$ is $A_1$-bounded, then it generates an anti-recurrence sequence $A_n$
    such that $A_n-\kappa n$ is $\tau$-automatic.
\end{theorem}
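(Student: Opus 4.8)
The plan is to show that the ``remainder sequence'' $A_n - \kappa(n-1) - A_1 \pmod{\kappa}$, suitably re-indexed, is the fixed point of a $\tau$-uniform substitution, exactly as was done for the anti-bonaccis in Theorem~\ref{thm:abonacci}, and then invoke Cobham's little theorem to conclude it is $\tau$-automatic. By Lemma~\ref{lem:3} we already know $A_n \in I_n = [\kappa(n-1)+1,\kappa n]$, so we may write $A_n = \kappa(n-1) + r_n$ with $r_n \in [1,\kappa]$, and it suffices to prove that the sequence $(r_n)_{n\geq 1}$ (or the shifted $(r_{n+1})_{n\geq 0}$, to respect the index-$0$ convention) is $\tau$-automatic; then $A_n - \kappa n = r_n - \kappa$ is $\tau$-automatic as well.

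First I would establish the block structure: by the proof of Lemma~\ref{lem:3}, each interval $I_j$ contains exactly one anti-recurrence number and $\tau$ many $B$-blocks, the initial $B$-block of $I_j$ is an interval, and at most one of the $\tau$ blocks in $I_j$ fails to be a consecutive interval. Consequently $I_j$ generates precisely the $\tau$ anti-recurrences $A_{(j-1)\tau+1},\ldots,A_{j\tau}$, and the mex-rule of Lemma~\ref{lem:1} determines these $B$-blocks purely from the position of the single anti-recurrence number lying in $I_j$, i.e.\ from $r_j$. The next step is to make this dependence explicit: as in Lemma~\ref{lem:an}, I would compute the $\tau$ consecutive first differences $A_{(j-1)\tau+\ell+1}-A_{(j-1)\tau+\ell}$ as a function of $r_j$ — they are $\kappa-1$ (i.e.\ $-1 \bmod \kappa$) for every block that is a consecutive interval, with one or two ``correction'' differences depending on where, within $I_j$, the $A$-number sits and whether it falls strictly between two blocks. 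Summing, the total is $\tau\kappa \equiv 0 \pmod \kappa$ (this is the arithmetic-progression fact for $A_{n\tau+1}$ noted after Lemma~\ref{lem:3}), so $(r_{(j-1)\tau+1},\ldots,r_{j\tau})$ is a well-defined word in $[1,\kappa]^\tau$ depending only on $r_j$. This is exactly the substitution rule $\sigma\colon r_j \mapsto (r_{(j-1)\tau+1},\ldots,r_{j\tau})$, a $\tau$-uniform substitution on the finite alphabet of values that $r_n$ can take; since the initial block of each $I_j$ is always an interval, every image word $\sigma(r_j)$ has the same first letter, so $(r_n)$ is the fixed point $\lim_n \sigma^n(r_1)$, and Cobham's little theorem gives a $\tau$-DFAO producing $r_n$ on input $n$ in base $\tau$.

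The main obstacle, as in Lemma~\ref{lem:an}, is the bookkeeping for the ``exceptional'' block: one must verify that the correction differences never push $r_n$ outside $[1,\kappa]$, so that the alphabet of the substitution is genuinely finite and the induction of Lemma~\ref{lem:3} closes. This is precisely where $A_1$-boundedness is used — it is what guarantees $A_1+\tau-1 \le \kappa$, and Lemma~\ref{lem:ineq} supplies the matching lower bound $A_1-\tau+1 \ge k$ — so the range $[A_1-\tau+1, A_1+\tau-1] \bmod \kappa$ lands strictly inside $I_h \setminus (\text{initial interval of length }k)$ for every generated $A_h$. The remaining work is the routine case analysis (even vs.\ odd position of the exceptional block, interior block vs.\ between-blocks), entirely parallel to the anti-bonacci computation but with general $\tau$ in place of $\tau=k$; I would present the general difference formula analogous to Equation~\eqref{eq:block} and then read off $\sigma$. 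Since the $k$-bonacci case $\mathbf a = (1,\ldots,1)$ is itself $A_1$-bounded (there $A_1 = t_k = \binom{k+1}{2} \le (k-1)k+2$ for $k\ge 2$), Theorem~\ref{thm:abonacci} is recovered as a special case, which serves as a consistency check on the argument.
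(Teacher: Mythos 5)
Your proposal is correct and follows essentially the same route as the paper: extract from the proof of Lemma~\ref{lem:3} that each interval $I_j$ generates $\tau$ anti-recurrences in a way depending only on $A_j\bmod\kappa$, note that $A_1$-boundedness together with Lemma~\ref{lem:ineq} confines the residues to the finite range $[A_1-\tau+1,A_1+\tau-1]$, read off a $\tau$-uniform substitution, and apply Cobham's little theorem. The paper's proof is just a terser version of the same argument, so no further comparison is needed.
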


\begin{proof}
    In the proof of Lemma~\ref{lem:3} we saw that $I_j$ generates $\tau$ anti-recurrence
    numbers. This process depends only on the value of $A_j\hspace{-0.2cm}\mod\kappa$.
    Furthermore, we found that the anti-recurrence numbers are all in $[A_1-\tau+1,A_1+\tau-1]$
    if we compute modulo~$\kappa$. Therefore, there are only $2\tau-1$ possible values.
    We have a uniform substitution of length $\tau$ on an alphabet of size $2\tau-1$.
    By Cobham's little theorem, the difference sequence $A_n-\kappa n$ is $\tau$-automatic.
    It can be recognized by a DFAO with at most $2\tau-1$ states.
\end{proof}

\section{Final remark}

We have shown that a specific class of anti-recurrence sequences are the sums of a linear
sequence and an automatic sequence. We left much to be explored, most notably the extension
of 
{Theorem \ref{main}} to general anti-recurrence sequences. 
Does the conjecture hold without the restriction of $A_1$-boundedness?
Are the rusty numbers sums of linear sequences and automatatic sequences?
There is the more general class of complementary sequences 
that goes back to Fraenkel~\cite{fraenkel}. 
Is it possible to single out complementary sequences
that are sums of linear sequences and automatic sequences? 

\section{Acknowledgement}

Gandhar Joshi received support from the London Mathematical Society via grant SC7-2425-18.

\bibliographystyle{siam}
\bibliography{antirecurrence}

\bigskip
\hrule
\bigskip
\noindent 2020 {\it Mathematics Subject Classification}:
Primary  11B85; Secondary 11B37, 68R15.

\noindent \emph{Keywords: } linear anti-recurrence, automatic sequence, $k$-bonacci numbers. 

\bigskip
\hrule
\bigskip

\noindent (Concerned with sequences
\seqnum{A075326},
\seqnum{A096268}, \seqnum{A116178},
\seqnum{A249032},
\seqnum{A265389}, \seqnum{A299409}, \seqnum{A304499}, \seqnum{A304502}.
)

\end{document}